\theoremstyle{plain}\newtheorem{Theorem}{Theorem}[section]
\theoremstyle{plain}
\theoremstyle{plain}
\theoremstyle{plain}\newtheorem{Lemma}[Theorem]{Lemma}
\theoremstyle{plain}\newtheorem{Proposition}[Theorem]{Proposition}
\theoremstyle{plain}
\theoremstyle{definition}
\theoremstyle{definition}\newtheorem{Example}[Theorem]{Example}
\theoremstyle{definition}
\theoremstyle{definition}\newtheorem{Remark}[Theorem]{Remark}
\theoremstyle{definition}
\theoremstyle{definition}
  \def\OG{{\mathcal{O}G}}  
  \def\OH{{\mathcal{O}H}}  
  \def\OP{{\mathcal{O}P}}
  \def\OQ{{\mathcal{O}Q}}
\def\CE{{\mathcal{E}}}  \def\OR{{\mathcal{O}R}}
\def\CF{{\mathcal{F}}}  
  \def\ON{{\mathcal{O}N}}
\def\CL{{\mathcal{L}}}
\def\CO{{\mathcal{O}}}
\def\CT{{\mathcal{T}}}
\def\R{{\mathbb R}}
\def\Aut{\mathrm{Aut}}                
\def\Br{\mathrm{Br}}             \def\ten{\otimes}
\def\chr{\mathrm{char}}
\def\dim{\mathrm{dim}}           
\def\End{\mathrm{End}}
\def\foc{\mathfrak{foc}}
          \def\GL{\mathrm{GL}}
\def\Hom{\mathrm{Hom}}
\def\ker{\mathrm{ker}}           
\def\Id{\mathrm{Id}}             \def\tenA{\otimes_A}
             \def\tenB{\otimes_B}
\def\Ind{\mathrm{Ind}}           
\def\Inn{\mathrm{Inn}}
           \def\tenL{\otimes_L}
           \def\tenO{\otimes_{\mathcal{O}}}
\def\mod{\mathrm{mod}}           
\def\op{\mathrm{op}}
\def\Out{\mathrm{Out}}
\def\Pic{\mathrm{Pic}}
\def\Stab{\mathrm{Stab}}
\def\tenOP{\otimes_{\mathcal{O}P}}
         \def\tenOQ{\otimes_{\mathcal{O}Q}}
\def\rk{\mathrm{rk}}      
\def\Res{\mathrm{Res}}           \def\tenOR{\otimes_{\mathcal{O}R}}
\title{Linear source invertible bimodules and Green correspondence} 
\author{Markus Linckelmann and Michael Livesey} 
\begin{document}

\maketitle

\begin{abstract}
We show that the Green correspondence induces an injective group 
homomorphism from the linear source Picard group $\CL(B)$
of a block $B$ of a finite group algebra to the linear source Picard 
group $\CL(C)$, where $C$ is the Brauer correspondent of $B$. 
This homomorphism maps the trivial source Picard group $\CT(B)$ 
to the trivial source Picard group $\CT(C)$.  
We show further that the endopermutation source Picard group $\CE(B)$ 
is bounded in terms of the defect groups of $B$ and that when $B$ has a normal defect group $\CE(B)=\CL(B)$. Finally we prove that the rank of any invertible $B$-bimodule is bounded by that of $B$.
\end{abstract}

\section{Introduction}

Let $p$ be a prime and $k$ a perfect field of characteristic $p$. We 
denote by  $\CO$  either a complete discrete valuation ring with 
maximal ideal $J(\CO)=$ $\pi\CO$ for some $\pi\in$ $\CO$, 
with residue  field $k$ and field of fractions $K$ of characteristic 
zero, or $\CO=k$.  We make the blanket assumption that $k$ and 
$K$ are large enough for the finite groups and their subgroups 
in the statements below. 

Let $A$ be an $\CO$-algebra. An $A$-$A$-bimodule $M$ is called
{\it invertible} if $M$ is finitely generated projective as a left
$A$-module, as a right $A$-module, and if there exits an 
$A$-$A$-bimodule $N$ which is finitely generated projective as a left 
and right $A$-module such that $M\tenA N\cong$ $A\cong$ $N\tenA M$ as 
$A$-$A$-bimodules.
The set of isomorphism classes of invertible $A$-$A$-bimodules is
a group, denoted $\Pic(A)$ and called the {\it Picard group of} $A$, 
where the product is induced by the tensor product over $A$. The
isomorphism class of the $A$-$A$-bimodule $A$ is the unit element 
of $\Pic(A)$.

Given a finite group $G$, a {\it block of $\OG$} is an indecomposable 
direct factor $B$ of $\OG$ as an algebra. Any such block $B$ determines 
a $p$-subgroup $P$ of $G$, called a {\it defect group of $B$}, uniquely
up to conjugation. Moreover, $B$ determines a block $C$ of $\CO N_G(P)$
with $P$ as a defect group, called the {\it Brauer correspondent of $B$}.
When regarded as an $\CO(G\times G)$-module, $B$ is a trivial source
module with vertices the $G\times G$-conjugates of the diagonal subgroup 
$\Delta P=$ $\{(u,u)\ |\ u\in P\}$ of $P\times P$. The 
Brauer correspondent $C$ is the Green correspondent of $B$ with
respect to the subgroup $N_G(P)\times N_G(P)$ of $G\times G$.
We denote by $\CL(B)$ the subgroup of $\Pic(B)$ of isomorphism classes
of invertible $B$-$B$-bimodules $X$ having a linear source (that is, a
source of $\CO$-rank $1$) for some (and hence any) vertex.
We denote by $\CT(B)$ the subgroup of $\CL(B)$ of isomorphism classes
of invertible $B$-$B$-bimodules $X$ having a trivial source for some
vertex. Note that if $\CO=k$, then $\CL(B)=\CT(B)$. In general, the
canonical surjection $B\to$ $k\tenO B$ induces an isomorphism
$\CT(B)\cong$ $\CT(k\tenO B)$ which extends to a surjective group
homomorphism $\CL(B)\to$ $\CT(k\tenO B)$. If $\chr(\CO)=0$, then
the kernel of this homomorphism is canonically isomorphic to 
$\Hom(P/\foc(\CF),\CO^\times)$; see 
\cite[Theorem 1.1, Remark 1.2.(d),(e)]{BoKL} for more details. 

\begin{Theorem} \label{PicGreencorr}
Let $G$ be a finite group, $B$ a block of $\OG$, and $P$ a defect group
of $B$. Set $N=$ $N_G(P)$ and denote by $C$ the block of $\ON$ which 
has $P$ as a defect group and which is  the Brauer correspondent of $B$. 
Let $X$ be a linear source invertible $B$-$B$-bimodule,
and let $Q$ be a vertex of $X$ contained in $P\times P$.
Then $N\times N$ contains the normaliser in $G\times G$ of the vertex $Q$
of $X$. Denote by $Y$ the $N\times N$-Green correspondent of $X$ with 
respect to $Q$.
Then $Y$ is a linear source invertible $C$-$C$-bimodule whose isomorphism 
class does not depend on the choice of $Q$ in  $P\times P$. Moreover, if 
$X$ has a trivial source, so does $Y$. 
The map $X\mapsto Y$ induces an injective group homomorphism
$$\CL(B) \to \CL(C)\ $$
which restricts to an injective group homomorphism
$$\CT(B) \to \CT(C)\ .$$
\end{Theorem}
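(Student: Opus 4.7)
The plan is to first pin down the structure of the vertex $Q$, apply classical Green correspondence to produce a candidate bimodule $Y$, and then transport the relation $X\tenB X^*\cong B$ through the correspondence to prove invertibility of $Y$; this last step will be the main obstacle.

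A standard argument using $X\tenB X^*\cong B$ (whose vertex is $\Delta P$) together with $Q\subseteq P\times P$ and $|Q|\leq|P|$ shows that both projections $Q\to P$ are isomorphisms, so $Q=\Delta_\varphi P=\{(u,\varphi(u))\mid u\in P\}$ for some $\varphi\in\Aut(P)$. The inclusion $N_{G\times G}(Q)\subseteq N\times N$ is then immediate: any $(g,h)$ normalising $Q$ has $g,h\in N_G(P)=N$. Classical Green correspondence yields an indecomposable $\CO(N\times N)$-module $Y$ with vertex $Q$, namely the unique summand of $\Res^{G\times G}_{N\times N}(X)$ with vertex $Q$; preservation of vertex and source ensures $Y$ has linear (resp.\ trivial) source whenever $X$ does, and block-Brauer compatibility of Green correspondence ($b\cdot X\cdot b=X$ together with $\Br_P(b)c=c$) forces $cYc=Y$, so $Y$ is a $C$-$C$-bimodule.

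The hard part will be showing $Y$ is invertible. My strategy is to transport $X\tenB X^*\cong B$ through the correspondence. Writing $cXc=Y\oplus U$, $cX^*c=Y^*\oplus U'$, and $cBc=C\oplus V$, where $U$, $U'$, $V$ contain only summands of vertex strictly below the maximal possible ($Q$, resp.\ $\Delta P$) up to $N\times N$-conjugacy, we obtain
$$(Y\oplus U)\otimes_C(Y^*\oplus U')\cong c(X\tenB X^*)c\cong cBc\cong C\oplus V,$$
and comparing summands of maximal vertex yields $Y\otimes_C Y^*\cong C$, provided the cross terms $Y\otimes_C U'$, $U\otimes_C Y^*$ and $U\otimes_C U'$ have no summand of vertex $\Delta P$. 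This vertex-preservation under bimodule tensor product, most cleanly verified via the Brauer construction $(-)(\Delta P)$ (which is monoidal on linear source modules), is the crux of the argument. Symmetry gives $Y^*\otimes_C Y\cong C$, hence $Y\in\Pic(C)$.

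For the remaining claims: independence of the choice of $Q\subseteq P\times P$ holds because any two twisted diagonal vertices of $X$ in $P\times P$ are $G\times G$-conjugate via an element of $N\times N$, and this conjugation is an inner automorphism of $N\times N$ giving an isomorphism of Green correspondents. Multiplicativity of the assignment $X\mapsto Y$ follows from the same tensor computation, and preservation of trivial source restricts the resulting homomorphism to $\CT(B)\to\CT(C)$. For injectivity, if $Y\cong C$ then $Q=\Delta P$, and $X$ is the Green correspondent of $C$ with vertex $\Delta P$, which is $B$.
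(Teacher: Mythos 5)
Your approach is genuinely different from the paper's (which works at the level of source algebras, via the canonical embedding $L\to A$ of a source algebra of $C$ into one of $B$, and shows that linear source invertible $A$-$A$-bimodules are of the form $A_\alpha$ with $\alpha$ an automorphism that can be arranged to preserve $L$). However, the central isomorphism your proof rests on is false. You claim
$$(Y\oplus U)\otimes_C(Y^*\oplus U')\ \cong\ c(X\tenB X^*)c\,,$$
but the left side is a tensor over $C$ while the right side is the restriction/truncation of a tensor taken over $B$, and these do not agree. Take $X=B$ (so $X^*=B$), hence $cXc=cX^*c=cBc=C\oplus V$. Then the left side is $(C\oplus V)\otimes_C(C\oplus V)\cong C\oplus V\oplus V\oplus(V\otimes_C V)$, while the right side is $cBc=C\oplus V$; these are visibly not isomorphic unless $V=0$. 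The natural map $cXc\otimes_C cX^*c\to c(X\tenB X^*)c$ exists but is neither injective (the $B$-relations collapsed on the right are strictly more than the $C$-relations on the left) nor surjective in general.

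Because of this, the invertibility of $Y$ is not established, and the same flawed computation is invoked for multiplicativity, so the claim that $X\mapsto Y$ is a group homomorphism is also unsupported. The ``crux'' you flag -- that cross terms have no maximal-vertex summand -- would indeed need to be proved even in a corrected setup, and the remark about the Brauer construction being monoidal is a plausible heuristic but not an argument; in particular it would not repair the wrong base ring in the tensor product. The paper's route is not an alternative one could bypass here: it is precisely the identification of $\CL(A)$ with classes $A_\alpha$ (Proposition~\ref{TA-Prop1}), the possibility of choosing $\alpha$ to stabilise the Brauer-correspondent source algebra $L$ (Propositions~\ref{TA-Prop2} and~\ref{TA-Prop4}), and the computation in Proposition~\ref{Greencorrbimod} identifying the Green correspondent of $\OG i_\alpha\tenA i\OG$ with $\ON j_\beta\tenL j\ON$ that simultaneously delivers invertibility of $Y$, independence of the choice of vertex, and the group-homomorphism property.
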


The strategy is to translate this to a statement on the  source algebras
of $B$ and $C$, and then play this back to block algebras via the 
canonical Morita equivalences between blocks and their source algebras. 
By a result of Puig \cite[14.6]{Puigmodules}, a source 
algebra $A$ of $B$ contains canonically a source algebra $L$ of $C$.  

As a first step  we observe  that if $\alpha$ is an automorphism of $A$ 
which preserves $L$, then the $B$-$B$-bimodule corresponding to the 
invertible $A$-$A$-bimodule $A_\alpha$ is the Green correspondent 
of the $C$-$C$-bimodule corresponding to the invertible 
$L$-$L$-bimodule $L_\beta$, where $\beta$ is the automorphism of $L$
obtained from restricting $\alpha$.  See Proposition \ref{Greencorrbimod} 
for a precise statement and a proof, as well as the beginning of 
Section \ref{backgroundSection} for the notation. 

The second step  is to observe  that an element in $\CL(B)$, given by an 
invertible $B$-$B$-bimodule $X$,  corresponds via the canonical Morita 
equivalence, to an invertible $A$-$A$-bimodule 
of the form  $A_\alpha$ for some algebra automorphism $\alpha$ of $A$
which preserves the image of $\OP$ in $A$. 

The third and  key step is to show that $\alpha$  can be chosen in such 
a way that $\alpha$ preserves in addition the subalgebra $L$ of $A$. 
Such an $\alpha$ restricts therefore to an automorphism $\beta$ 
of $L$, and yields an invertible
$L$-$L$-bimodule $L_\beta$. By step one, the corresponding invertible 
$C$-$C$-bimodule $Y$ is then the Green correspondent of $X$, and the 
map $X\mapsto Y$ induces the group homomorphism $\CL(B)\to$ $\CL(C)$ 
as stated in the theorem. This third step proceeds in two stages - first
for the subgroup $\CT(B)$, and then for $\CL(B)$. This part of the proof 
relies significantly on the two papers \cite{Lifocal} and \cite{BoKL}. 

\begin{Example} \label{specialcase}
Consider  the special  case in Theorem \ref{PicGreencorr}
where $X$ is induced by a group automorphism $\alpha$  of $G$
which stabilises $B$. We use the same letter $\alpha$ for
the extension of $\alpha$ to an algebra automorphism of $\OG$.
Note that $\OG_\alpha$ is a permutation $\CO(G\times G)$-module.
Suppose that $\alpha$ stabilises $B$. Then the indecomposable direct 
summand $B_\alpha$ of $\OG_\alpha$  is a trivial source
$\CO(G\times G)$-module.  If $(P,e)$ is a maximal $B$-Brauer 
pair, then $(\alpha(P),\alpha(e))$ is a maximal $B$-Brauer pair as well, 
hence $G$-conjugate to $(P,e)$. After possibly composing $\alpha$ 
by a suitable chosen inner automorphism of 
$G$, we may assume that $\alpha$ stabilises $(P,e)$. 
Then $\alpha$ restricts to a group automorphism $\beta$ of $N=$ 
$N_G(P)$ which stabilises the Brauer correspondent $C$ of $B$.
The bimodule $B_\alpha$ represents an element in $\CT(B)$. Its 
$N\times N$-Green correspondent is the $C$-$C$-bimodule 
$C_\beta$, and this bimodule represents the image
in $\CT(C)$ under the homomorphism $\CT(B)\to$ $\CT(C)$ in
Theorem \ref{PicGreencorr}.
\end{Example}

By a result of Eisele \cite{EisPic},  if $\CO$ has characteristic zero, 
then $\Pic(B)$ is a finite group. We do not know, however, whether in 
that case the order of $\Pic(B)$ is determined `locally', that is, in 
terms of the defect groups of $B$. 
We show that there is a local bound, without any assumption on the 
characteristic of $\CO$,  for the order of the subgroup $\CE(B)$ of 
isomorphism classes of invertible $B$-$B$-bimodules $X$ having an 
endopermutation module as a source, for some vertex. 

\begin{Theorem} \label{Picbound}
Let $G$ be a finite group and $B$ a block algebra of $\OG$. Let 
$P$ be a defect group of $B$. Then the order of $\CE(B)$ is bounded
in terms of a bound which depends only on $P$.
\end{Theorem}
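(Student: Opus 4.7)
The plan is to attach to each $[X]\in\CE(B)$ two invariants — the $G\times G$-conjugacy class of a vertex $Q\le P\times P$ of $X$, and the Dade class $[V]\in D(Q)$ of the endopermutation source $V$ — and to bound $|\CE(B)|$ by bounding both the invariants and the $\CT(B)$-valued kernel of the invariant map.

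For the first invariant, the invertibility of $X$ forces $Q$ to be a twisted diagonal subgroup $\Delta_\varphi P$ for some $\varphi\in\Aut(P)$, and the $G\times G$-conjugacy class of $Q$ is determined by the class of $\varphi$ in $\Out(P)$ modulo the action of the fusion system $\CF$ of $B$; this gives at most $|\Out(P)|$ possibilities. For the second invariant, after identifying $Q\cong P$ the Dade class $[V]\in D(Q)\cong D(P)$ is well defined and stable under $\Aut_\CF(P)$, and it vanishes precisely when $V$ is the trivial $\OQ$-module, that is, when $[X]\in\CT(B)$. Hence the map $[X]\mapsto([Q],[V])$ has kernel contained in $\CT(B)$. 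By Theorem \ref{PicGreencorr}, $\CT(B)$ embeds into $\CT(C)$ for the Brauer correspondent $C$, and since $C$ has $P$ as a normal defect group, $\CT(C)$ is finite with order bounded in terms of $P$ alone by the structure theory of blocks with normal defect group (compare \cite{BoKL}). Combining these counts would yield a bound of the shape
$$|\CE(B)|\ \le\ |\CT(C)|\cdot|\Out(P)|\cdot N\ ,$$
where $N$ is the number of Dade classes in $D(P)^\CF$ that can occur as sources of invertible bimodules.

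The main obstacle is to bound $N$ purely in terms of $P$. The fusion-stable Dade group $D(P)^\CF$ is in general infinitely generated, so the required finiteness has to come from properties specific to invertible bimodules rather than from fusion stability alone. The natural target is the torsion subgroup $D^t(P)^\CF$, which is finite by Bouc's structure theorem on $D(P)$, and forcing $[V]$ to lie in this torsion subgroup is the crux of the argument. The strategy here would be to combine the group structure of $\Pic(B)$ (so that tensor powers of $X$ remain invertible with endopermutation source) with the $\CO$-rank bound on invertible $B$-bimodules asserted in the abstract to rule out the infinite-order ``syzygy'' classes of $D(P)$ as sources. Once $[V]\in D^t(P)^\CF$ is established, the inequality $N\le|D^t(P)|$ closes the argument with a bound depending only on $P$.
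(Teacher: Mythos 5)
Your overall strategy is genuinely close to the paper's: both proofs split $\CE(B)$ into three pieces --- a part controlled by $\Out(P,\CF)$ (your vertex conjugacy class $[Q]$), a part living in the Dade group $D(P)$ (your Dade class $[V]$), and a kernel bounded in terms of $P$ --- and both must show that the Dade-group piece lands in the torsion subgroup $D^t(P)$, which is finite by Bouc/Puig. The paper packages this through the exact sequence
$$1 \to \Out_P(A) \to \CE(B) \xrightarrow{\ \Phi\ } D(P,\CF)\rtimes \Out(P,\CF)$$
of \cite[Theorem 1.1]{BoKL}, rather than through ad hoc conjugacy invariants, which makes the counting immediate (kernel times image); your multiplicative bound $|\CE(B)|\le |\CT(C)|\cdot|\Out(P)|\cdot N$ needs the same homomorphism structure to be justified, since $[X]\mapsto([Q],[V])$ is not by itself a group homomorphism and so the fibers are not obviously cosets.

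The real gap, which you acknowledge yourself, is that the crux --- showing the occurring Dade classes lie in $D^t(P)$ --- is only sketched, not proved. The paper disposes of this in one line: it cites the finiteness of $\CE(B)$ from \cite[Theorem 1.1 (iii)]{BoKL}, so the image of $\CE^\Delta(B)$ under $\Phi$ in $D(P,\CF)$ is automatically a finite subgroup of $D(P)$, hence contained in $D^t(P)$, whose order is bounded by $P$ via \cite[Corollary 2.4]{PuFeit}. Your alternative route --- using the rank bound $\rk_\CO(M)\le\rk_\CO(B)$ of Theorem \ref{Picbound2} together with the group structure to show that a non-torsion Dade class would force the cap ranks of sources of tensor powers $X^{\otimes n}$ to grow without bound --- is an interesting idea and could in principle give a more self-contained argument. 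But as written it is not an argument: it requires (a) verifying that $\Phi$ (or your version of it) is a group homomorphism so that $\Phi([X]^{mn})$ has Dade component $n[W]$ for a fixed $[W]$, (b) handling the $\Out(P,\CF)$-twist in the semidirect product so that $[W]$ is indeed a multiple of $[V]$, and (c) proving that cap ranks of $n[W]$ are unbounded for non-torsion $[W]$, none of which appear. Until one of these two routes (citing finiteness of $\CE(B)$, or completing the rank argument) is actually executed, the proof is incomplete precisely at its central step.

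One further small discrepancy: your bound on the ``kernel'' goes through $\CT(B)\hookrightarrow\CT(C)$ via Theorem \ref{PicGreencorr}, whereas the paper bounds the actual kernel $\Out_P(A)$ of $\Phi$ directly by $\Hom(E,k^\times)$ via \cite[Proposition 14.9]{Puigmodules}. Your route works (one can bound $\CT(C)$ in terms of $P$ using the normal-defect structure), but it is a detour: $\Out_P(A)$ is a proper subgroup of $\CT(B)$ in general, so the paper's bound is tighter and avoids appealing to the Green correspondence theorem, which is logically independent of Theorem \ref{Picbound}.
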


This will be proved as a consequence of \cite[Theorem 1.1]{BoKL}. 

\begin{Theorem} \label{Picbound2}
Let $A$ be an $\CO$-algebra which is free of finite rank as an 
$\CO$-module. Suppose that $k\tenO A$ is split and has a 
symmetric positive definite Cartan matrix. Then for every invertible 
$A$-$A$-bimodule $M$ we have $\rk_\CO(M)\leq$ $\rk_\CO(A)$.
Moreover, we have $\rk_\CO(M) =$ $\rk_\CO(A)$ if and only if
$M\cong$ $A_\alpha$ for some automorphism $\alpha$ of $A$. 
\end{Theorem}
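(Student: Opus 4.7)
The plan is to recast the question as a Cauchy--Schwarz inequality with respect to the inner product defined by the Cartan matrix. Set $\bar A := k\tenO A$ and $\bar M := k\tenO M$. Flat base change gives that $\bar M$ is an invertible $\bar A$-$\bar A$-bimodule, and since idempotents lift uniquely from $\bar A$ to $A$ one has $\rk_\CO M = \dim_k \bar M$ and $\rk_\CO A = \dim_k \bar A$. Let $S_1,\dots,S_n$ be the simple $\bar A$-modules with projective covers $P_1,\dots,P_n$; put $d_i := \dim_k S_i$, let $d := (d_i)$ as a column vector, and let $C = (C_{ij})$ with $C_{ij} := \dim_k \Hom_{\bar A}(P_i,P_j)$ be the Cartan matrix, symmetric and positive definite by hypothesis. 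Splitness of $\bar A$ gives $\bar A \cong \bigoplus_i P_i^{d_i}$ as left $\bar A$-modules, so $\dim_k \bar A = d^T C d$.

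Because tensoring on the left with $\bar M$ is a self-equivalence of the category of left $\bar A$-modules, it permutes isomorphism classes of indecomposable projectives by a permutation $\sigma$ of $\{1,\dots,n\}$ and preserves $\Hom$-spaces between them. Hence $\bar M \cong \bigoplus_i P_{\sigma(i)}^{d_i}$ and $C_{\sigma(i),\sigma(j)} = C_{ij}$ for all $i,j$. Setting $v_i := d_{\sigma(i)}$, the $\sigma$-invariance of $C$ yields $v^T C v = d^T C d$, while a short computation using $C_{\sigma(i),j} = C_{i,\sigma^{-1}(j)}$ gives $\dim_k \bar M = \sum_i d_i(Cd)_{\sigma(i)} = d^T C v$. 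Cauchy--Schwarz for the positive definite pairing $\langle x,y\rangle := x^T C y$ then yields
\[
\dim_k \bar M \ = \ \langle d,v\rangle \ \leq \ \|d\|_C\|v\|_C \ = \ \|d\|_C^2 \ = \ \dim_k \bar A,
\]
which proves the inequality.

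Equality in Cauchy--Schwarz together with $\|d\|_C = \|v\|_C$ and the non-negativity of the entries of $d$ and $v$ forces $v = d$, that is, $d_{\sigma(i)} = d_i$ for all $i$. Reindexing the decomposition of $\bar M$ then gives $\bar M \cong \bigoplus_j P_j^{d_j} \cong \bar A$ as left $\bar A$-modules. Lifting the isomorphism $\bar A \to \bar M$ along the surjection $\Hom_A(A,M) \to \Hom_{\bar A}(\bar A,\bar M)$ (using that $A$ is a projective $A$-module) produces a map $\phi : A \to M$ of left $A$-modules which is surjective by Nakayama, and hence, being a surjection between free $\CO$-modules of equal rank, an isomorphism. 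Transporting the right $A$-action on $M$ through $\phi$ yields a right $A$-action on $A$ commuting with left multiplication, necessarily of the form $a \cdot b = a\,\alpha(b)$ for some $\CO$-algebra endomorphism $\alpha$ of $A$ defined by $\phi(\alpha(b)) = \phi(1)\cdot b$; invertibility of $M$ forces $\alpha$ to be an automorphism, and therefore $M \cong A_\alpha$ as bimodules. The converse is immediate.

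The main obstacle is the combinatorial bookkeeping that identifies $\dim_k \bar M$ with the bilinear form $d^T C v$ and establishes the $\sigma$-invariance of the Cartan matrix; once these are in place, the inequality and the analysis of the equality case reduce to Cauchy--Schwarz and standard lifting arguments over $\CO$.
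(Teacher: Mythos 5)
Your argument is correct and follows essentially the same route as the paper: reduce to $k$, express $\dim_k(\bar A)$ and $\dim_k(\bar M)$ as $d^T C d$ and $d^T C v$ for the dimension vector $d$ and its permutation $v$, use $\sigma$-invariance of the Cartan matrix and Cauchy--Schwarz for the inequality, and analyse the equality case to get $\bar M\cong\bar A$ as left modules. The only cosmetic difference is that you carry out the lift of the isomorphism $\bar A\to\bar M$ to $\CO$ and the identification $M\cong A_\alpha$ by hand, whereas the paper packages the combinatorics into a separate lemma and cites \cite[Proposition 4.7.18]{LiBookI} for the final step.
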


This result applies in particular to any algebra $A$ which is Morita
equivalent to a block algebra of a finite group algebra over $\CO$ 
or $k$. We use this in the proof of the next result which is in turn 
used to show, in Example \ref{EPicExamples},  that Theorem 
\ref{PicGreencorr} does not hold with $\CL$ replaced by $\CE$.

\begin{Theorem} \label{LPicnormal}
Let $G$ be a finite group and $B$ a block of $\OG$ with a normal defect 
group. Then $\CE(B)=$ $\CL(B)$.
\end{Theorem}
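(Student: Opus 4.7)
The inclusion $\CL(B) \subseteq \CE(B)$ is immediate: any $\CO$-module of rank one is an endopermutation module, since the $P$-action on $\End_\CO(V) \cong \CO$ is necessarily trivial. I would prove the reverse inclusion under the hypothesis that $P$ is normal in $G$. In this case $N_G(P) = G$, so $B$ is its own Brauer correspondent and the conclusion of Theorem \ref{PicGreencorr} contributes nothing by itself. The approach I would take is instead to pass to the source algebra and exploit Theorem \ref{Picbound2}.

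Given an invertible $B$-$B$-bimodule $X$ with endopermutation source, let $A = iBi$ be a source algebra of $B$ for a chosen source idempotent $i$. Under the canonical Morita equivalence between $B$ and $A$, the bimodule $X$ corresponds to an invertible $A$-$A$-bimodule $M$ that again has endopermutation source for the interior $P\times P$-algebra structure on $A \tenO A^\op$. Since $P$ is normal in $G$, Puig's structure theorem for blocks with normal defect group identifies $A$, as an interior $P$-algebra, with a twisted group algebra $\CO_\gamma(P\rtimes E)$, where $E$ is the inertial quotient of $B$ and $\gamma$ is a $2$-cocycle of $E$ with values in $\CO^\times$.

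The plan is to establish $\rk_\CO(M) = \rk_\CO(A)$. Once this is shown, Theorem \ref{Picbound2} applied to $A$ produces an algebra automorphism $\alpha$ of $A$ with $M \cong A_\alpha$. The twisted group algebra structure of $A$ will then force $\alpha$ to permute, up to units in $\CO^\times$, the canonical basis of $A$ indexed by $P \rtimes E$; in particular $\alpha$ stabilises the image of $\CO P$ in $A$, and $A_\alpha$ has a one-dimensional $\Delta_\alpha P$-invariant source, i.e.\ a linear source. Transporting back through the Morita equivalence then yields $X \in \CL(B)$.

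The rank equality is the main technical obstacle. Since $M$ is invertible with endopermutation source, Theorem \ref{Picbound2} gives the upper bound $\rk_\CO(M) \leq \rk_\CO(A)$, and the same for $M^\vee$. The isomorphism $M \tenA M^\vee \cong A$, combined with the fact that $A$ has nonvanishing Brauer construction at $\Delta P$, should imply that the Brauer construction of $M$ at $\Delta P$ is likewise nontrivial; this forces the vertex of $M$ (viewed via the interior $(P\times P)$-algebra structure) to be a full twisted diagonal $\Delta_\phi P$. A Mackey-decomposition count of the induced rank from $\Delta_\phi P$, together with the matching bound on $\rk_\CO(M^\vee)$ and the multiplicativity of rank under $\tenA$, would then pin down both $\rk_\CO(M) = \rk_\CO(A)$ and the rank of the endopermutation source to be one, completing the proof.
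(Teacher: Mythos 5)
You have correctly identified the high-level shape of the paper's argument: reduce to the source algebra, which when $P$ is normal is a twisted group algebra $\CO_\gamma(P\rtimes E)$, use Theorem \ref{Picbound2} to bound $\rk_\CO(M)$ from above by $\rk_\CO(A)$, and squeeze against a lower bound to force the source to have rank one. This is the same strategy the paper takes. However, the decisive step is missing from your proposal, and a secondary step is wrong.

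The gap is the lower bound. The paper proves that $\dim_k(M)\geq \dim_k(B)\cdot\dim_k(V)$, which, combined with $\dim_k(M)\leq\dim_k(B)$, gives $\dim_k(V)=1$. To get this lower bound, the paper realizes $B=kGb$ with $G=P\rtimes H$, $Z=C_H(P)$ cyclic, $b\in kZ$, shows via Green's Indecomposability Theorem that $U=\Ind_Q^{P\times P}(V)$ is indecomposable, applies Clifford theory (with $P\times P\trianglelefteq G\times G$) to write $M\cong\Ind_I^{G\times G}(W)$ where $I=\Stab_{G\times G}(U)$, and then shows $[G\times G:I]\geq[H:Z]$ by observing that $(h,1)\in I$ forces $c_h$ to be inner on $P$, hence $h\in Z$. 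None of this is present in your sketch. Your invocation of ``a Mackey-decomposition count of the induced rank from $\Delta_\phi P$, together with the matching bound on $\rk_\CO(M^\vee)$ and the multiplicativity of rank under $\tenA$'' does not supply it: there is no general multiplicativity of $\CO$-rank under $\tenA$ for bimodules (the rank of $M\tenA N$ is not a simple function of $\rk(M)$ and $\rk(N)$), and a raw Mackey decomposition of $\Res^{G\times G}_{P\times P}\Ind^{G\times G}_{\Delta_\phi P}(V)$ gives only an upper bound on $\dim(M)$, whereas what you need is a lower bound tied to $\dim(V)$.

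Your second step is also problematic. The claim that once $M\cong A_\alpha$ the twisted-group-algebra structure ``forces $\alpha$ to permute, up to units in $\CO^\times$, the canonical basis of $A$ indexed by $P\rtimes E$'' is false: a general automorphism of $A$ (even an inner one, by a unit of the form $1+r$ with $r\in J(A^P)$) need not respect the group basis in this way. More importantly, this step is redundant: if you have already forced $\dim_k(V)=1$, then by definition $M$ has linear source and you are done; whereas knowing only $M\cong A_\alpha$ does not by itself give a linear source unless $\alpha$ preserves the image of $P$, which is exactly what has to be proved (cf.\ Proposition \ref{TA-Prop1}). So your argument, even if completed, would loop back to what it set out to show.
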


\section{Background} \label{backgroundSection}

Let $A$, $B$ be $\CO$-algebras, and let $\alpha : A\to$ $B$ be an
algebra homomorphism. For any $B$-module $V$ we denote by
${_\alpha{V}}$ the $A$-module which is equal to $V$ as an
$\CO$-module, and on which $a\in$ $A$ acts as $\alpha(a)$. We use
the analogous notation for right modules and bimodules.

Any $A$-$A$-bimodule of the form ${A_\alpha}$ for some $\alpha\in$ 
$\Aut(A)$ is invertible, and we have $A\cong$ $A_\alpha$ as bimodules
if and only of $\alpha$ is inner. The map $\alpha\mapsto A_\alpha$
induces an injective group homomorphism $\Out(A)\to$ $\Pic(A)$.
This group homomorphism need not be surjective. 
An invertible $A$-$A$-bimodule $M$ is of the form ${A_\alpha}$ for some
$\alpha\in$ $\Aut(M)$ if and only if $M\cong$ $A$ as left 
$A$-modules, which is also equivalent to $M\cong$ $A$ as right
$A$-modules. 
See e.g. \cite[\S 55 A]{CR2} or \cite[Proposition 2.8.16]{LiBookI} 
for proofs and more details.

\begin{Lemma}[cf. {\cite[Lemma 2.4]{Lifocal}}] \label{autom-extend}
Let $A$ be an $\CO$-algebra and $L$ a subalgebra of $A$. Let $\alpha\in$ 
$\Aut(A)$ and let $\beta : L\to$ $A$ be an $\CO$-algebra homomorphism. 
The following are equivalent.

\begin{enumerate}
\item[{\rm (i)}]
There is an automorphism $\alpha'$ of $A$ which extends the map $\beta$ 
such that $\alpha$ and $\alpha'$ have the same image in $\Out(A)$.

\item[{\rm (ii)}]
There is an isomorphism of $A$-$L$-bimodules ${A_\beta }\cong$ 
${A_\alpha }$.

\item[{\rm (iii)}]
There is an isomorphism of $L$-$A$-bimodules ${_\beta A}\cong$ 
${_\alpha A}$.
\end{enumerate}
\end{Lemma}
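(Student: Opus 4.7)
My plan is to prove (i) $\Leftrightarrow$ (ii) carefully and then observe that (i) $\Leftrightarrow$ (iii) follows by the symmetric argument with left and right interchanged. The whole lemma rests on one observation: since $A_\beta$ and $A_\alpha$ are both free of rank one as left $A$-modules (each with generator $1 \in A$), every left $A$-linear map $\psi : A_\beta \to A_\alpha$ is right multiplication by the unique element $u := \psi(1)$; if moreover $\psi$ is invertible then its inverse is also right multiplication by some $v \in A$, and evaluating $\psi \circ \psi^{-1}$ and $\psi^{-1} \circ \psi$ at $1$ forces $uv = vu = 1$, so $u \in A^\times$.

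For (i) $\Rightarrow$ (ii), given an automorphism $\alpha'$ extending $\beta$ with $\alpha' = c_u \circ \alpha$ for some $u \in A^\times$ (where $c_u$ denotes conjugation by $u$), I would verify directly that $m \mapsto mu$ is an $A$-$A$-bimodule isomorphism $A_{\alpha'} \to A_\alpha$, and then restrict the right action along $L \hookrightarrow A$. Since $\alpha'|_L = \beta$ by hypothesis, the restricted bimodule is exactly $A_\beta$, yielding the desired $A$-$L$-bimodule isomorphism. Conversely, for (ii) $\Rightarrow$ (i), starting from an $A$-$L$-bimodule isomorphism $\psi : A_\beta \to A_\alpha$, the observation provides $u \in A^\times$ with $\psi(a) = au$. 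Right $L$-linearity of $\psi$, evaluated at $a = 1$, reads $\beta(\ell) u = u \alpha(\ell)$ for every $\ell \in L$, so that $\alpha' := c_u \circ \alpha \in \Aut(A)$ is an automorphism extending $\beta$ that agrees with $\alpha$ modulo $\Inn(A)$.

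I do not anticipate any real obstacle; the only point warranting a moment's care is the confirmation that a bijective left $A$-linear self-map of $A$ is right multiplication by a two-sided unit rather than merely an element with one-sided inverses, and this has already been absorbed into the key observation above.
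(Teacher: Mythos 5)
Your proof is correct, and it is essentially the standard argument (the paper itself does not reprove the lemma but cites \cite[Lemma 2.4]{Lifocal}, where the same approach is taken): a bijective left $A$-linear endomorphism of $A$ is right multiplication by a unit, and tracking right $L$-linearity of that unit translates exactly into $\alpha' = c_u \circ \alpha$ extending $\beta$. Your handling of the one-sided versus two-sided inverse point is also correct, and the symmetry remark disposing of (iii) is fine.
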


\begin{Remark} \label{bimodswitch}
For $G$, $H$ finite groups, we switch without further comment between
$\CO(G\times H)$-modules and $\OG$-$\OH$-bimodules as follows: given
an $\OG$-$\OH$-bimodule $M$, we regard $M$ as an 
$\CO(G\times H)$-module (and vice versa) via $(x,y)\cdot m=$ 
$xmy^{-1}$, where $x\in$ $G$, 
$y\in$ $H$, and $m\in$ $M$. If $M$ is indecomposable as an 
$\OG$-$\OH$-bimodule, then $M$ is indecomposable as an 
$\CO(G\times H)$-module, hence has a vertex (in $G\times H$) and a 
source. If $Q$ is a subgroup of $G$, $R$ a subgroup of $H$, and $W$ an
$\CO(Q\times R)$-module, then with these identifications, we have an isomorphism
of $\CO(G\times H)$-modules (or equivalently of $\OG$-$\OH$-bimodules)
$$\Ind_{Q\times R}^{G\times H}(W) \cong \OG \tenOQ W\tenOR \OH $$
sending $(x,y)\ten w$ to $x\ten w\ten y^{-1}$, where $x\in G$, $y\in H$, and
$w\in W$. Thus if $M$ is a relatively $(Q\times R)$-projective 
$\CO(G\times H)$-module, then, as an $\OG$-$\OH$-bimodule,  $M$ is 
isomorphic to a direct summand of 
$$\OG\tenOQ W\tenOR \OH$$ 
for some $\OQ$-$\OR$-bimodule $W$.  Note further that if $B$ is a block of
$\OG$ with defect group $P$ and $C$ a block of $\OH$ with defect group
$Q$, then $B\tenO C^\op$ is a block of $\CO(G\times H)$
with defect group $P\times Q$, via the canonical algebra isomorphisms
$\CO(G\times H)\cong$ $\OG\tenO \OH\cong$ $\OG\tenO (\OH)^\op$. 
By~\cite[Lemma 2.3]{EatLivPic}, if $A$ and $L$ are source algebras of 
$B$ and $C$, respectively, then $A\tenO L^\op$ is a source algebra of 
$B\tenO C^\op$. 
\end{Remark} 

Let $G$ be a finite group and $B$ a block algebra of $\OG$ with a
defect group $P$. Recall our standing assumption that $K$ and $k$ are
splitting fields for the subgroups of $G$.
Choose a block idempotent $e$ of $kC_G(P)$ such that
$(P,e)$ is a maximal $B$-Brauer pair and a source idempotent
$i\in$ $B^P$ associated with $e$; that is, $i$ is a primitive 
idempotent in $B^P$ such that $\Br_P(i)e\neq$ $0$. Since $k$ is
assumed to be large enough, it follows that the choice of $(P,e)$ 
determines a (saturated) fusion system $\CF$ on $P$. In particular,
the group $\Out_\CF(P)\cong$ $N_G(P,e)/PC_G(P)$ is a $p'$-group,
and hence lifts uniquely up to conjugation by an element in $\Inn(P)$
to a $p'$-subgroup $E$ of $\Aut_\CF(P)\cong$ $N_G(P,e)/C_G(P)$.
The group $E$ is called the inertial quotient of $B$ (and depends on
the choices as just described).
As in  \cite[1.13]{AOV}, we denote by $\Aut(P,\CF)$ the subgroup of
$\Aut(P)$  consisting of all automorphisms of $P$ which stabilise $\CF$. 
In particular, the automorphisms in $\Aut(P,\CF)$ normalise the subgroup 
$\Aut_\CF(P)$ of $\Aut(P)$, and we set 
$$\Out(P,\CF)= \Aut(P,\CF)/\Aut_\CF(P)\ .$$ 
The algebra $A=$ $iBi$ is called a {\it source algebra of} $B$. If no
confusion arises, we identify $P$ with its image $iP=Pi$ in 
$A^\times$. Following~\cite{BoKL}, we set $\Aut_P(A)$ to be the 
group of algebra automorphisms of $A$ which fix $P$ elementwise, 
and by $\Out_P(A)$ the quotient of $\Aut_P(A)$ by the subgroup of 
inner automorphisms induced by conjugation with elements in 
$(A^P)^\times$.

\begin{Remark} \label{sourcealgebraRemark}
We will make use of the following standard facts on source algebras. 
With the notation above, by \cite[3.5]{Puigpoint} the $B$-$A$-bimodule 
$Bi$ and the  $A$-$B$-bimodule $iB$ induce a Morita  equivalence  
between $A$ and $B$. 
More precisely, the equivalence $iB\tenB- : \mod(B)\to$ $\mod(A)$ is 
isomorphic to the functor sending a $B$-module $U$ to the $A$-module
$iU$ and a $B$-module homomorphism $\varphi : U\to U'$ to the induced 
$A$-module homomorphism $iU\to iU'$ obtained from restricting $\varphi$ 
to $iU$. 

Following \cite[6.3]{LiKlein}, this Morita equivalence between $A$ 
and $B$ keeps track of  vertices and sources in the following sense: if 
$U$ is an indecomposable  $B$-module, then there exists a 
vertex-source pair $(Q,W)$ of $U$ such  that $Q\leq P$ and such that 
$W$ is isomorphic to a direct summand of  $\Res_Q(iU)$.  
In particular, a finitely generated $B$-module $U$ is a $p$-permutation 
$B$-module if and only if $iU$ is a $P$-permutation module. Since a
$p$-permutation $k\ten B$-module lifts uniquely, up to isomorphism, 
to a $p$-permutation $B$-module, it follows that a $P$-permutation
$k\tenO A$-module lifts uniquely, up to isomorphism, to a
$P$-permutation $A$-module. 

This applies to bimodules over block algebras via the Remark
\ref{bimodswitch}. Morita equivalences are compatible with tensor
products of algebras. In particular, there is a Morita equivalence between
$B\tenO B^\op$ and $A\tenO A^\op$ sending a $B$-$B$-bimodule $X$
to the $A$-$A$-bimodule $iXi$.  If $X$ is an  invertible 
$B$-$B$-bimodule, then $iXi$ is  an invertible $A$-$A$-bimodule, and 
the map $X\mapsto iXi$ induces a  canonical  group isomorphism 
$$\Pic(B)\cong \Pic(A)\ .$$
By the above remarks on vertices and sources applied to the block 
$B\tenO B^\op$ of $\CO(G\times G)$, as an $\CO(G\times G)$-module,
$X$ has a vertex $Q$ which is contained in $P\times P$, such that
the restriction to $P\times P$ of $iXi$ has an indecomposable direct 
summand with vertex $Q$. 

Following the notation in \cite{BoKL}, we denote by $\CE(B)$, $\CL(B)$,
$\CT(B)$ the subgroups of $\Pic(B)$ of isomorphism classes of
inevrtible $B$-$B$-bimodules with endopermutation, linear, trivial
sources, respectively. We denote by $\CE(A)$, $\CL(A)$, $\CT(A)$ their 
respective images in $\Pic(A)$ under the canonical isomorphism
$\Pic(B)\cong$ $\Pic(A)$. Again by the above remarks on vertices and
sources, this creates no conflict of notation if $A$ is a itself isomorphic
as an interior $P$-algebra to a block algebra of some (other) finite 
group with defect group $P$. 
See \cite[Section 6.4]{LiBookII} for an expository account on source 
algebras which includes the statements in this Remark.
\end{Remark}

The Brauer correspondent $C$ of $B$ has a source algebra $L$ of the
form $L=\CO_\tau(P\rtimes E)$ as an interior $P$-algebra, for some
$\tau\in$ $H^2(E, k^\times)$, where as above $E\cong$ $\Out_\CF(P)$ 
is the inertial quotient of $B$ and of $C$ determined by the choice of the
maximal $B$-Brauer pair $(P,e)$ (which is also a maximal $C$-Brauer
pair), and where we identify $k^\times$ with its canonical inverse image 
in $\CO^\times$. 

The fusion system of $C$ determined by the choice of $(P,e)$ is 
$N_\CF(P)=$ $\CF_P(P\rtimes E)$. Since $\Aut(P,\CF)$ is a subgroup
of $\Aut(P,N_\CF(P))$ and since $\Aut_{N_\CF(P)}(P)=$ 
$\Aut_\CF(P)$, it follows that $\Out(P,\CF)$ is a subgroup of
$\Out(P,N_\CF(P))$. 

By a result of Puig, there is a 
canonical embedding of interior $P$-algebras $L \to A$. We review in 
the following Proposition the construction of this embedding and some 
of its properties.

\begin{Proposition}[{cf. \cite[Theorem A]{Kuenormal}, 
\cite[Propositions 14.6, 14.9]{Puigmodules}, 
\cite[Proposition 4.10]{FanPuig}}]
\label{LtoA}
Let $G$ be a finite group, $B$ a block of $\OG$, and $(P,e)$ a
maximal $B$-Brauer pair, with associated inertial quotient $E$.
Let $C$ be the Brauer correspondent of $B$.
Denote by $\hat e$ the unique block 
idempotent of  $\CO C_G(P)$ which lifts $e$. The following hold. 

\begin{enumerate}
\item[{\rm (i)}]
Let $j$ be a primitive idempotent of $\CO C_G(P)\hat e$. Then $j$
remains primitive in $C^P$, and $j$ is a source idempotent both for
the block $C$ of $\CO N_G(P)$ as well as for $\CO N_G(P,e)\hat e$.
More precisely, the algebra  $L=$ $j\CO N_G(P)j$ is a source algebra
of $C$, and we have $L=$ $j \CO N_G(P,e) j$. There is $\tau\in$ 
$H^2(E, k^\times)$, inflated to $P\rtimes E$, such that 
$$L\cong \CO_\tau(P\rtimes E)$$
as interior $P$-algebras. 

\item[{\rm (ii)}]
Let $f$ be a primitive idempotent in $B^{N_G(P,e)}$ satisfying 
$\Br_P(f)e\neq 0$. Then $i=jf$ is a source idempotent in $B^P$ 
satisfying $\Br_P(i)e\neq 0$. Set $A=$ $iBi$. The idempotent $f$
commutes with $L$, and  multiplication by $f$ 
induces an injective homomorphism of interior $P$-algebras
$$L \to A$$
which is split injective as an $L$-$L$-bimodule homomorphism. Moreover,
every indecomposable direct $L$-$L$-bimodule summand of $A$ in a 
complement of $L$ is relatively projective, as an 
$\CO (P\times P)$-module, with respect to a twisted 
diagonal subgroup of $P\times P$ of order strictly smaller than $|P|$.

\item[{\rm (iii)}]
As an $A$-$A$-bimodule, $A$ is isomorphic to a direct summand of
$A\ten_L A$, and every other indecomposable direct $A$-$A$-bimodule
summand of $A\ten_L A$ is relatively projective, as an 
$\CO (P\times P)$-module, with respect to a 
twisted diagonal subgroup of $P\times P$ of order strictly smaller 
than $|P|$.

\item[{\rm (iv)}]
The map sending $\zeta\in$ $\Hom(E,k^\times)$ to the linear
endomorphism of $L$ given by the assignment $uy\mapsto$ 
$\zeta(y)uy$, where $u\in$ $P$ and $y\in$ $E$, and where we identify 
$L=$ $\CO_\tau(P\rtimes E)$ induces a group isomorphism
$$\Hom(E,k^\times) \cong \Out_P(L)$$

\item[{\rm (v)}]
The map sending an automorphism $\alpha$ of $A$ which fixes
$P$ elementwise and stabilises $L$ to the restriction of $\alpha$
to $L$ induces an injective group homomorphism 
$$\Out_P(A)\to \Out_P(L) \cong \Hom(E,k^\times)\ .$$

\end{enumerate}
\end{Proposition}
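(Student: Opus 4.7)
The plan is to prove the five parts in order. I would lean on the Fong--Reynolds reduction to the normal-defect case for (i), K\"ulshammer's structure theorem for the resulting twisted group algebra, and Puig's Mackey-type analysis of the source algebra $A$ for the embedding and its properties in (ii) and (iii); parts (iv) and (v) are then structural consequences.

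For (i), since $\hat e$ is the block idempotent of $\CO C_G(P)$ lifting $e$, its stabiliser in $N_G(P)$ is $N_G(P,e)$, and the Fong--Reynolds correspondence (with transport of source-algebra data as in \cite{Kuenormal}) yields a Morita equivalence between $C$ and the block $\CO N_G(P,e)\hat e$ of the smaller group. In $N_G(P,e)$ the defect group $P$ is normal with quotient realising the inertial quotient $E$, so K\"ulshammer's theorem gives the twisted group algebra isomorphism $L \cong \CO_\tau(P\rtimes E)$ for some $\tau \in H^2(E,k^\times)$. The equality $L = j\CO N_G(P)j = j\CO N_G(P,e)j$ follows from $j = j\hat e$ and $\CO N_G(P)\hat e = \CO N_G(P,e)\hat e$; primitivity of $j$ in $C^P$ follows from the inclusion $\CO C_G(P)\hat e \subseteq C^P$ (since $P$ centralises $C_G(P)$) together with primitivity of $j$ in $\CO C_G(P)\hat e$.

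For (ii), I would first note that $j$ and $f$ commute, since $f \in B^{N_G(P,e)}$ centralises $\CO N_G(P,e) \supseteq \CO C_G(P) \ni j$. Then $i = jf$ is an idempotent with $\Br_P(i)e = \Br_P(j)\Br_P(f)e \neq 0$, which makes $i$ a source idempotent of $B$ associated with $e$. The same centralising property combined with (i) gives $fL = Lf$, so $x \mapsto xf$ defines an $\CO$-algebra homomorphism $L \to A$. Split injectivity as an $L$-$L$-bimodule map and the claimed vertex bound on the complement reduce to Puig's Mackey decomposition of $A$ as an $\CO(P\times P)$-module: the indecomposable $L$-$L$-bimodule summands of $A$ correspond to twisted diagonal subgroups of $P\times P$, and the unique summand with full diagonal vertex $\Delta P$ is exactly the image of $L$, while the remaining summands have strictly smaller vertex. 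This is where the bulk of the technical content lies, and I expect it to be the main obstacle. Part (iii) then follows from (ii) by a formal tensor argument: applying $A\ten_L(-)\ten_L A$ and exploiting the symmetric structure of $A$ over $\CO$ promotes the $L$-$L$-bimodule decomposition of $A$ to an $A$-$A$-bimodule decomposition of $A\ten_L A$ with $A$ as one summand, the rest inheriting the same vertex bound.

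For (iv), any $\alpha \in \Aut_P(L)$ fixes $P$ pointwise, so once a lift $y \in L^\times$ of each element of $E$ is chosen, $\alpha(y) = \zeta(y)y$ for some $\zeta(y) \in \CO^\times$; multiplicativity of $\alpha$ forces $\zeta \colon E \to \CO^\times$ to be a group homomorphism, and conjugation by elements of $(L^P)^\times$ contributes precisely the characters which reduce trivially modulo $\pi$, yielding $\Out_P(L) \cong \Hom(E, k^\times)$. For (v), I would first upgrade each class in $\Out_P(A)$ to a representative stabilising $L$: given $\alpha \in \Aut_P(A)$, the subalgebra $\alpha(L)\subseteq A$ realises another split interior $P$-embedding of the same source-algebra form, and by the uniqueness of the source idempotent $j$ inside $A$ up to conjugation by $(A^P)^\times$, an inner adjustment forces $\alpha(L) = L$. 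Restriction then defines a homomorphism $\Out_P(A) \to \Out_P(L)$. For injectivity, if $\alpha|_L$ is inner one can arrange $\alpha|_L = \Id_L$; the $A$-$A$-bimodule decomposition of (iii) then shows that $\alpha$ acts trivially on the $A$-summand of $A\ten_L A$, which is enough to conclude that $\alpha$ is inner on $A$.
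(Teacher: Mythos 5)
The paper does not actually give a proof of Proposition~\ref{LtoA}; it cites K\"ulshammer, Puig, Fan--Puig, and the expository treatments in \cite[Theorems 6.14.1, 6.7.4, 6.15.1, Lemma 6.16.2]{LiBookII}. Your reconstruction follows exactly the route of that literature: Fong--Reynolds reduction to $N_G(P,e)$ plus K\"ulshammer's theorem for (i); Puig's Mackey-type decomposition of $A$ as an $\OP$-$\OP$-bimodule for the splitting and vertex bound in (ii); a tensor-and-symmetry argument for (iii); and an averaging argument for (iv) and (v), with injectivity in (v) read off from the unique full-vertex $A$-$A$-bimodule summand of $A\ten_L A$ from (iii). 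That is the right shape of argument, and the injectivity step for (v) is in particular exactly the argument the paper itself uses later in the proof of Proposition~\ref{TA-Prop2}.

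Two steps as written, however, are incorrect and need repair. In (i), the asserted identity $\CO N_G(P)\hat e = \CO N_G(P,e)\hat e$ is false: $\hat e$ is not central in $\CO N_G(P)$, and the left ideal $\CO N_G(P)\hat e$ genuinely involves coset representatives outside $N_G(P,e)$. What is true, and what one actually needs, is that $jxj=0$ for every $x\in N_G(P)\setminus N_G(P,e)$, because then ${}^{x^{-1}}\hat e$ and $\hat e$ are orthogonal block idempotents of $\CO C_G(P)$ and $jxj = j\,x\,{}^{x^{-1}}\hat e\,\hat e\,j = 0$; this gives $j\CO N_G(P)j = j\CO N_G(P,e)j$ directly. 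In (iv), the claim that $\alpha(y)=\zeta(y)y$ for a scalar $\zeta(y)\in\CO^\times$ is too strong. Since $\alpha$ fixes $P$ pointwise, $\alpha(y)y^{-1}$ lies in $(L^P)^\times$, which is a local ring but is not $\CO^\times$ in general; one only gets $\alpha(y)\equiv \lambda_y y$ modulo a $1+J(L^P)$-correction, where $\lambda_y\in k^\times$ (lifted). To produce a representative of the class of $\alpha$ for which $\alpha(y)=\zeta(y)y$ literally holds, you need a Maschke-type averaging over $E$ — this is precisely what the paper does, in a harder setting, in the proof of Proposition~\ref{TA-Prop2}, and the same construction works here. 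With those two repairs your outline matches the standard proof.
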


Proofs of the statements in Proposition \ref{LtoA} can be found in the
expository account of this material in \cite[Theorem 6.14.1]{LiBookII}, 
\cite[Theorem 6.7.4]{LiBookII}, \cite[Theorem 6.15.1]{LiBookII},
and \cite[Lemma 6.16.2]{LiBookII}.
We record the following elementary group theoretic observation.

\begin{Lemma} \label{Qnormaliser}
Let $G$ be a finite group and $P$ a subgroup. Let $Q$ be a subgroup of
$P\times P$. Suppose that the two canonical projections $P\times P\to P$
both map $Q$ onto $P$. The following hold.

\begin{enumerate}
\item[{\rm (i)}]
If $(x,y)\in G\times G$ such that ${^{(x,y)}{Q}}\leq P\times P$, then
$(x,y)\in N_G(P)\times N_G(P)$.

\item[{\rm (ii)}] 
We have $N_{G\times G}(Q)\leq N_G(P)\times N_G(P)=
N_{G\times G}(P\times P)\ .$
\end{enumerate}
\end{Lemma}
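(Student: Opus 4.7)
The plan is to exploit the surjectivity hypothesis on the two projections $\pi_1, \pi_2 : P \times P \to P$ applied to $Q$, so that $\pi_1(Q) = \pi_2(Q) = P$, and then use a finiteness-plus-containment argument.

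For part (i), I would first observe that conjugation by $(x,y)$ in $G \times G$ acts componentwise, namely $(u,v) \mapsto (xux^{-1}, yvy^{-1})$. Consequently the projections of ${}^{(x,y)}Q$ are
$$\pi_1({}^{(x,y)}Q) = x\,\pi_1(Q)\,x^{-1} = xPx^{-1}, \qquad \pi_2({}^{(x,y)}Q) = y\,\pi_2(Q)\,y^{-1} = yPy^{-1}.$$
The hypothesis ${}^{(x,y)}Q \leq P \times P$ forces $xPx^{-1} \leq P$ and $yPy^{-1} \leq P$. Since $P$ is finite and conjugation preserves order, these containments are equalities, yielding $x, y \in N_G(P)$.

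For part (ii), I would first note the (standard, and essentially automatic) equality $N_G(P) \times N_G(P) = N_{G \times G}(P \times P)$, which follows because ${}^{(x,y)}(P \times P) = xPx^{-1} \times yPy^{-1}$, and this equals $P \times P$ exactly when both $x$ and $y$ normalise $P$. The inclusion $N_{G \times G}(Q) \leq N_G(P) \times N_G(P)$ is then immediate from (i): any $(x,y)$ normalising $Q$ in particular satisfies ${}^{(x,y)}Q = Q \leq P \times P$, so part (i) applies.

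There is no real obstacle here — this is a purely elementary group-theoretic observation, with the only substantive input being the surjectivity of the projections, which is precisely what converts a one-sided containment ${}^{(x,y)}Q \leq P \times P$ into two-sided information about $x$ and $y$ individually.
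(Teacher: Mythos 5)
Your proof is correct and takes essentially the same approach as the paper: both reduce the hypothesis via the surjectivity of the projections to the containment $xPx^{-1}\leq P$ (the paper by picking elements, you by projecting the whole conjugated subgroup) and then invoke finiteness to upgrade this to equality. You additionally spell out the identity $N_G(P)\times N_G(P)=N_{G\times G}(P\times P)$, which the paper asserts without comment.
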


\begin{proof}
Let $(x,y)\in$ $G\times G$ such that ${^{(x,y)}{Q}}\leq P\times P$.
Let $u\in$ $P$. Since the first projection $P\times P\to P$ maps $Q$ 
onto $P$, it follows that there is $v\in P$ such that $(u,v)\in$ $Q$. Then 
${^{(x,y)}{(u,v)}}\in$ $P\times P$. In particular, ${^x{u}}\in P$. Thus $x\in$
$N_G(P)$. The same argument yields $y\in$ $N_G(P)$, and hence
$(x,y)\in$ $N_G(P)\times N_G(P)$. This shows (i), and (ii) follows 
immediately from (i).
\end{proof}

\begin{Remark} \label{GreencorrRem}
Let $G$ be a finite group, $P$ a $p$-subgroup, and $X$ an indecomposable
$\CO(G\times G)$-module with a vertex $Q$ contained in $P\times P$ such 
that the two canonical projections $P\times P\to P$ map $Q$ onto $P$. By
Lemma \ref{Qnormaliser} (ii),  the Green correspondence yields, up to 
isomorphism,  a unique  indecomposable direct summand $f(X)$ of the 
$\CO(N_G(P)\times N_G(P))$-module 
$\Res^{G\times G}_{N_G(P)\times N_G(P)}(X)$ with vertex $Q$ and a source 
which remains a source of $X$. Since any two vertices of $X$ are 
$G\times G$-conjugate, it follows from Lemma \ref{Qnormaliser} (i) that
the isomorphism class of $f(X)$ does not depend on the choice of a vertex 
$Q$ of $X$ in $P\times P$.  
\end{Remark}

\begin{Lemma} \label{PicAvertices}
Let $A$ be a source algebra of a block $B$ of a finite group algebra
$\OG$ with defect group $P$. Let $M$ be an invertible $A$-$A$-bimodule, 
and let $X$ be an invertible $B$-$B$-bimodule. The following hold.

\begin{enumerate}
\item[{\rm (i)}]
$M$ remains indecomposable as an $A$-$\OP$-bimodule and as an 
$\OP$-$A$-bimodule.

\item[{\rm (ii)}]
As an $\CO(P\times P)$-module, $M$ has an indecomposable direct 
summand with a vertex $Q$ such that both canonical projections 
$P\times P\to P$ map $Q$ onto $P$. In particular, $Q$ has order at 
least $|P|$.

\item[{\rm (iii)}]
As an $\CO(G\times G)$-module, $X$ has a vertex $Q$ contained in 
$P\times P$, and any such vertex $Q$ has the property that both
canonical projections $P\times P\to P$ map $Q$ onto $P$. In particular,
the vertices of $X$ have order at least $|P|$.

\end{enumerate}
\end{Lemma}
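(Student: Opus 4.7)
The plan is to prove (i), (ii) and (iii) in that order, with (ii) being the main content and (iii) following from (ii) by transport via the Morita equivalence.

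For (i), I would compute the algebra of $A$-$\OP$-bimodule endomorphisms of $M$. Since $M$ is invertible, right multiplication gives an algebra isomorphism $A^{\op} \cong \End_A(M)$. Under this identification, a left-$A$-linear endomorphism of $M$ commutes with the right $\OP$-action (via the structure map $\OP \to A$) if and only if the corresponding element $a \in A$ satisfies $ua = au$ for all $u \in P$, i.e.\ $a \in A^P$. Since $A = iBi$ with $i$ primitive in $B^P$, we have $A^P = iB^Pi$, whose only idempotents are $0$ and $i$; hence this algebra is local, and $M$ is indecomposable as an $A$-$\OP$-bimodule. The $\OP$-$A$-bimodule statement is symmetric, using the left-multiplication isomorphism $A \cong \End_{A^{\op}}(M)$.

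For (ii), I would first observe that $M$ is free as a left and as a right $\OP$-module. The source algebra $A$ is free as an $\OP$-module on both sides via $\OP \to A$ (a standard property of source algebras of blocks), and $M$ is projective as a left (resp.\ right) $A$-module; hence $M$ is projective over $\OP$ on both sides, and over a $p$-group projective modules are free. Writing $M = \bigoplus_i M_i$ as a direct sum of indecomposable $\CO(P \times P)$-modules with vertices $Q_i \leq P \times P$, Krull--Schmidt (the only indecomposable projective $\OP$-module is $\OP$ itself) forces each $M_i$ to be free on both sides over $\OP$. The crucial remaining step is to exhibit one $Q_i$ with both projections onto $P$. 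For this I would use the Brauer construction at $P$ (viewed as $\Delta P \leq P \times P$ from the $\CO(P \times P)$-module perspective): the identity $M \otimes_A N \cong A$, combined with the multiplicativity of the Brauer construction on tensor products of invertible interior $P$-algebra bimodules, yields $M(P) \otimes_{A(P)} N(P) \cong A(P)$; since $A(P) \neq 0$ (it contains the non-zero image $\Br_P(i)$ by the choice of source idempotent), we conclude $M(P) \neq 0$. Non-vanishing of $M(P)$ is equivalent to some $M_i$ having vertex $Q_i$ containing a $P \times P$-conjugate of $\Delta P$, i.e.\ a twisted diagonal $\{(u, wuw^{-1}) : u \in P\}$; any such subgroup has both projections onto $P$, so $Q_i$ does too.

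Statement (iii) follows from (ii) by the Morita equivalence in Remark \ref{sourcealgebraRemark}: a vertex $Q$ of $X$ as $\CO(G \times G)$-module contained in $P \times P$ coincides with the vertex of an indecomposable $\CO(P \times P)$-summand of $M = iXi$, so by (ii) it has both projections onto $P$. For any other vertex $Q'$ of $X$ contained in $P \times P$, write $Q' = {}^{(x,y)}Q$ for some $(x, y) \in G \times G$; Lemma \ref{Qnormaliser}(i) forces $(x, y) \in N_G(P) \times N_G(P)$, and conjugation by such elements preserves the surjectivity of both projections onto $P$. The main obstacle is justifying the multiplicativity of the Brauer construction on tensor products of invertible bimodules, which is classical for trivial source (and more generally $p$-permutation) bimodules but requires a more delicate argument in the general case.
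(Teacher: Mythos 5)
Your proof of (i) is essentially identical to the paper's: identify $\End_{\OP\tenO A^{\op}}(M)$ with $A^P$ via left multiplication and note that $A^P = iB^Pi$ is local because $i$ is primitive in $B^P$. That part is fine.

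The core of your argument, however, has a genuine gap which you yourself flag: the claimed multiplicativity $M(\Delta P)\otimes_{A(\Delta P)}N(\Delta P)\cong (M\tenA N)(\Delta P)$ for the Brauer construction. There is always a natural map from the left-hand side to the right-hand side, but it is neither injective nor surjective for general bimodules; the isomorphism is a theorem about $p$-permutation bimodules, and an invertible $A$-$A$-bimodule restricted to $P\times P$ need not be a $p$-permutation module (its sources may be arbitrary endopermutation or worse). Since Lemma~\ref{PicAvertices} is stated for \emph{all} invertible bimodules, you cannot invoke this multiplicativity, and your deduction that $M(\Delta P)\neq 0$ does not go through. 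The observation that $M$ is free over $\OP$ on either side (which is correct) does not help here: a free $\OP$-module is relatively $R$-projective for every subgroup $R$, so freeness alone puts no lower bound on the vertex. There is also a secondary weakness in your passage from (ii) to (iii): statement (ii) produces \emph{some} summand of $M|_{P\times P}$ with a vertex projecting onto $P$, but Remark~\ref{sourcealgebraRemark} only guarantees that a vertex of $X$ agrees with the vertex of \emph{some} (possibly different) summand; one would additionally need that the projection-compatible vertex is the one of maximal order, which (ii) does not assert.

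The paper avoids both issues by proving (iii) directly and then deducing (ii). The argument is elementary and does not use the Brauer construction: if the first projection of a vertex $Q\leq P\times P$ of $X$ has image $R\leq P$, then $Q\leq R\times G$, so $X$ is a direct summand of $\OG\tenOR X$ as a bimodule; for any $B$-module $U$, writing $U\cong X\tenB V$ and tensoring with $V$ shows $U$ is a summand of $\Ind^G_R\Res^G_R(U)$, so every $B$-module is relatively $R$-projective, which forces $R=P$ by minimality of the defect group. The same argument applies to the second projection, giving (iii), and then (ii) follows at once from Remark~\ref{sourcealgebraRemark} because the vertex of $X$ contained in $P\times P$ is realised as the vertex of an indecomposable summand of $M|_{P\times P}$. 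You should replace your Brauer-construction step with this defect-group argument; your proof of (i) and your use of Lemma~\ref{Qnormaliser} to handle conjugacy of vertices can be kept.
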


\begin{proof}
Since $M$ is an invertible bimodule, by Morita's theorem, we have an
algebra isomorphism $A\cong$ $\End_{A^\op}(M)$ sending $c\in$ $A$ to
the right $A$-endomorphism of $M$ given by left multiplication by $c$ 
on $M$. This restricts to an algebra isomorphism $A^P\cong$ 
$\End_{\OP\tenO A^\op}(M)$. Now $1_A=i$ is primitive in $B^P$, hence
$A^P$ is local, and thus so is $\End_{\OP\tenO A^\op}(M)$, implying the
statement (i). We prove next (iii). Suppose that the first projection
$P\times P\to$ $P$ maps $Q$ onto the subgroup $R$ of $P$. Then
$Q$ is contained in $R\times P$, hence in $R\times G$. It follows that $X$ 
is relatively $(R\times G)$-projective. Thus $X$ is isomorphic to a direct 
summand of $\OG\tenOR X$ as a bimodule. Let $U$ be a $B$-module. 
Then $U\cong$ $X\tenB V$ for some $B$-module $V$ because $X$ is an 
invertible bimodule. Thus $U\cong$ $X\tenB V$ is isomorphic to a direct 
summand of $\OG\tenOR X\tenB V\cong$ $\Ind^G_R(U)$. This shows that 
every  $B$-module is relatively $R$-projective, which forces $R=P$. A 
similar argument, using right $B$-modules, shows that the second 
projection $P\times P\to$ $P$ maps $Q$ onto $P$. This implies  (iii).
Statement  (ii) follows from (iii) by the Remark 
\ref{sourcealgebraRemark}.
\end{proof}

\begin{Remark}
The statements (ii), (iii) in Lemma \ref{PicAvertices} hold more 
generally if $M$, $X$ induce a stable equivalence of Morita  type; 
see \cite[Section 6]{Puigbook}.
\end{Remark}

\begin{Lemma}[{cf. \cite[Corollary 2.4.5]{LiBookI}}]  
\label{diagLemma}
Let $P$ be a finite group and $\varphi\in$ $\Aut(P)$. Set
$Q=$ $\{(\varphi(u), u)\ |\ u\in P\}$. We have
an isomorphism of $\CO(P\times P)$-modules
$$\Ind_Q^{P\times P}(\CO) \cong (\OP)_\varphi$$
sending $(x,y)\ten 1$ to $x\varphi(y^{-1})$, for all $x$, $y\in$ $P$.
\end{Lemma}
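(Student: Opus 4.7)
The plan is to exhibit the map explicitly and verify that it is a well-defined $\CO(P\times P)$-module isomorphism; the content of the lemma is really the identification of the stabiliser of a natural generator, and the rest is a dimension count.

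First I would pin down the $\CO(P\times P)$-action on $(\OP)_\varphi$. Following the convention recalled in Remark~\ref{bimodswitch}, for $x,y\in P$ and $m\in \OP$ the action is $(x,y)\cdot m = x\cdot m\cdot_\varphi y^{-1} = x\, m\, \varphi(y^{-1})$. Applying this to the element $1\in\OP$ gives $(x,y)\cdot 1 = x\varphi(y^{-1})$, which equals $1$ precisely when $x=\varphi(y)$, i.e.\ when $(x,y)\in Q$. Thus $Q$ is the full stabiliser of $1$ in $(\OP)_\varphi$ under the $P\times P$-action.

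Next I would define the map $\Phi\colon \CO(P\times P)\otimes_{\CO Q}\CO \to (\OP)_\varphi$ by $(x,y)\otimes 1 \mapsto x\varphi(y^{-1})$, extended $\CO$-linearly. To check that $\Phi$ is balanced over $\CO Q$, it suffices to verify, for $(u',u)\in Q$ with $u'=\varphi(u)$, that
\[
\Phi\bigl((x,y)(u',u)\otimes 1\bigr) = (xu')\varphi((yu)^{-1}) = x\varphi(u)\varphi(u^{-1})\varphi(y^{-1}) = x\varphi(y^{-1}) = \Phi\bigl((x,y)\otimes 1\bigr),
\]
where I used that $(u',u)$ acts trivially on the trivial module $\CO$. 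Left $P\times P$-equivariance is immediate from the definition of $\Phi$.

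Finally, since $Q\cong P$ has order $|P|$, both $\Ind_Q^{P\times P}(\CO)$ and $(\OP)_\varphi$ are free $\CO$-modules of rank $|P|$. For surjectivity, note that for any $g\in P$ one has $\Phi\bigl((g,1)\otimes 1\bigr) = g$, so the image of $\Phi$ contains the standard $\CO$-basis $P$ of $\OP$. A surjective map between free $\CO$-modules of the same finite rank is an isomorphism, which completes the argument. There is no real obstacle here; the only point requiring care is tracking the conventions for the right-twisted bimodule $(\OP)_\varphi$ so that the stabiliser of $1$ comes out to be $Q=\{(\varphi(u),u)\mid u\in P\}$ rather than its transpose.
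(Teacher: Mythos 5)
Your proof is correct and is exactly the direct verification the paper alludes to when it says the lemma is ``easily verified directly'': you identify $Q$ as the stabiliser of $1\in(\OP)_\varphi$, check that the induced map is well defined and $P\times P$-equivariant, and conclude by a rank-plus-surjectivity argument. The paper otherwise just cites the special case of \cite[Corollary 2.4.5]{LiBookI}, so there is no divergence of method here.
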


\begin{proof} 
This is easily verified directly. Note that this is the special 
case of \cite[Corollary 2.4.5]{LiBookI} applied to 
$G=$ $P\rtimes \langle \varphi\rangle $, $H=L=P$ and $x=$ $\varphi$. 
\end{proof}

\begin{Lemma} \label{ALAut}
Let $\alpha$ be an algebra automorphism of $A$ which preserves
$L$. Denote by $\beta$ the algebra automorphism of $L$ obtained
from restricting $\alpha$ to $L$. 

\begin{enumerate}
\item[{\rm (i)}]
The class of $\beta$ in $\Out(L)$ is uniquely determined by the class 
of $\alpha$ in $\Out(A)$.

\item[{\rm (ii)}]
If $\alpha$ is an inner automorphism of $A$, then $\beta$ is an inner 
automorphism of $L$.

\item[{\rm (iii)}]
If $c\in$ $A^\times$ satisfies $cLc^{-1}=L$, then there exists 
$d\in L^\times$ such that $cd^{-1}$ centralises $L$.
\end{enumerate}
\end{Lemma}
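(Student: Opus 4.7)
The plan is to derive (i) and (ii) from (iii), and to prove (iii) by a three-stage reduction concluding with Proposition~\ref{LtoA}(v). Assuming (iii): for (ii), writing the inner automorphism $\alpha = \mathrm{inn}_c$ with $c \in A^\times$ and $cLc^{-1} = L$, (iii) yields $d \in L^\times$ with $cd^{-1} \in C_A(L)$, whence $\beta = \alpha|_L = \mathrm{inn}_d|_L \in \Inn(L)$; statement (i) then follows by applying (ii) to $\alpha'\alpha^{-1}$, which preserves $L$ and lies in $\Inn(A)$ precisely when $\alpha$ and $\alpha'$ share a class in $\Out(A)$.

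For (iii), fix $c \in A^\times$ with $cLc^{-1} = L$ and multiply $c$ on the left by carefully chosen elements of $L^\times$ to reach a unit centralising $L$. First, the subgroup $cPc^{-1}$ of $L^\times$ is a $p$-subgroup of order $|P|$, and a Sylow-type argument, exploiting that $cPc^{-1}$ arises from conjugation by the unit $c$ of the source algebra $A$, furnishes $d_1 \in L^\times$ with $d_1 P d_1^{-1} = cPc^{-1}$, so $c_1 = d_1^{-1}c \in N_{A^\times}(P) \cap N_{A^\times}(L)$. Next, the induced automorphism $\psi$ of $P$ lies in $\Aut_\CF(P)$ because $A$ is a source algebra of $B$; since $P$ is a defect group, $\Aut_\CF(P) = \Aut_{N_\CF(P)}(P)$, which is realised by conjugation by elements of $N_{L^\times}(P)$ via the embedding $P \rtimes E \hookrightarrow L^\times$, so one picks $d_2 \in N_{L^\times}(P)$ inducing $\psi$ and sets $c_2 = d_2^{-1}c_1 \in C_{A^\times}(P) \cap N_{A^\times}(L)$.

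Finally, $\mathrm{inn}_{c_2} \in \Aut_P(A)$ is inner by $c_2 \in (A^P)^\times$, so has trivial class in $\Out_P(A)$; the injection $\Out_P(A) \hookrightarrow \Out_P(L)$ of Proposition~\ref{LtoA}(v) forces the class of $\gamma_{c_2} = \mathrm{inn}_{c_2}|_L$ in $\Out_P(L)$ to be trivial as well, producing $d_3 \in (L^P)^\times$ with $\gamma_{c_2} = \mathrm{inn}_{d_3}|_L$. Setting $d = d_1 d_2 d_3 \in L^\times$, a direct check gives $\gamma_c = \mathrm{inn}_d|_L$, which is equivalent to $cd^{-1} \in C_A(L)$.

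The principal obstacle is the first reduction: $L^\times$-conjugacy of order-$|P|$ $p$-subgroups of $L^\times$ is not a general feature of twisted group algebras (as already $L = kC_p$ shows, where order-$p$ subgroups need not be conjugate), so the argument must exploit the fact that $cPc^{-1}$ comes from conjugation by a unit of the source algebra $A$, invoking the Puig-Kunugi uniqueness of the defect-group embedding in $L$ to supply the required $L^\times$-conjugating element.
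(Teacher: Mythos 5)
Your derivation of (i) and (ii) from (iii) is exactly the paper's: (ii) is a trivial reformulation of (iii), and (i) follows by applying (ii) to $\alpha'\circ\alpha^{-1}$. The divergence is entirely in how you prove (iii), and there you have a genuine gap, which you yourself flag as ``the principal obstacle.''

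Your first reduction step asserts that the two order-$|P|$ $p$-subgroups $P$ and $cPc^{-1}$ of $L^\times$ are conjugate by some $d_1\in L^\times$. You correctly observe that this is \emph{not} a general feature of twisted group algebras -- your example $L=kC_p$, where $1+t^2$ and $1+t$ generate non-conjugate subgroups of order $p$ in the commutative algebra $k[t]/(t^p)$, makes exactly this point -- and you say the argument ``must exploit'' that $cPc^{-1}$ comes from a unit of $A$, gesturing at uniqueness of the defect-group embedding. But no actual argument is given. As written, step $1$ is an assertion, not a proof, and steps $2$ and $3$ hang on it. This is the gap.

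The paper avoids the three-stage reduction entirely. It observes that $Lc$ is an $L$-$L$-bimodule direct summand of $A$ of $\CO$-rank $r=|P|\cdot|E|$, so $r/|P|=|E|$ is prime to $p$. By Proposition~\ref{LtoA}(ii), every indecomposable $L$-$L$-bimodule summand of $A$ other than $L$ is relatively projective with respect to a twisted diagonal subgroup of $P\times P$ of order $<|P|$, hence has rank divisible by $p|P|$; so $L$ is the unique indecomposable summand whose rank-over-$|P|$ ratio is a $p'$-number, and a rank count forces $Lc\cong L$ as $L$-$L$-bimodules. Since $Lc\cong L_\gamma$ where $\gamma(l)=clc^{-1}$, the general fact that $L\cong L_\gamma$ iff $\gamma\in\Inn(L)$ yields $d\in L^\times$ with $clc^{-1}=dld^{-1}$ for all $l\in L$, and then $cd^{-1}$ centralises $L$. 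Note that this $d$ already satisfies $dPd^{-1}=cPc^{-1}$, so the paper's one-step argument produces precisely the $d_1$ you need for step~$1$ -- but at that point it has also already produced the final $d$, so the subsequent fusion and $\Out_P$ reductions are unnecessary. If you want to salvage your three-stage outline, the cleanest way to justify step~$1$ is via this same bimodule isomorphism, which collapses your proof back to the paper's.
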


\begin{proof}
We first prove (iii). Let $c\in$ $A^\times$
such that $cLc^{-1}=L$. Then $Lc$ is an $L$-$L$-bimodule summand
of $A$. Note that $L$ and $Lc$ have the same $\CO$-rank $r$, and that
$\frac{r}{|P|}=$ $|E|$ is prime to $p$. It follows from Proposition 
\ref{LtoA} (ii) that $L$ is up to isomorphism the unique 
$L$-$L$-bimodule summand of $A$ with this property, and hence
$L\cong$ $Lc$ as $L$-$L$-bimodules. This implies that conjugation
by $c$ on $L$ induces an inner automorphism of $L$, given by
an element $d\in$ $L^\times$. Then $cd^{-1}$ acts as the identity
on $L$. This proves (iii). 
The statements (ii) and (iii) are clearly equivalent. In order to show (i),
let $\alpha$, $\alpha'$ two automorphisms which
preserve $L$ and which represent the same class in $\Out(A)$. Thus 
there exists $c\in$ $A^\times$ such that $\alpha'(a)=$ $c\alpha(a) c^{-1}$
for all $a\in$ $A$. Since $\alpha$, $\alpha'$ preserve $L$, it follows 
that conjugation by $c$ preserves $L$. By (ii), conjugation by $c$ induces
an inner automorphism of $L$, and hence the restrictions to $L$ of
$\alpha$, $\alpha'$ belong to the same class in $\Out(L)$. 
The result follows.
\end{proof}

\begin{Lemma} \label{fusionLemma}
Let $A$ be a source algebra of a block with defect group $P$ and
fusion system $\CF$ on $P$. Let $\psi\in$ $\Aut(P)$. There is an
isomorphism of $A$-$\OP$-bimodules $A\cong$ $A_\psi$ if
and only if $\psi\in$ $\Aut_\CF(P)$.
\end{Lemma}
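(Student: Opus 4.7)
The plan is to convert the bimodule isomorphism $A \cong A_\psi$ into a statement about conjugation inside $A^\times$, and then appeal to Puig's description of the fusion system $\CF$ in terms of the source algebra $A$.

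First I would analyse the $A$-$\OP$-bimodule homomorphisms $f\colon A\to A_\psi$. Any such $f$, being left $A$-linear, is determined by $c=f(1)\in A$ via $f(a)=ac$; it is bijective precisely when $c\in A^\times$, since $A$ and $A_\psi$ have equal $\CO$-rank and $A$ is semiperfect. Imposing right $\OP$-linearity gives, for every $u\in P$,
$$uc \;=\; f(u) \;=\; f(1)\cdot u \;=\; c\,\psi(u),$$
where the middle dot denotes the twisted right action on $A_\psi$. Hence an isomorphism $A\cong A_\psi$ of $A$-$\OP$-bimodules exists if and only if there is $c\in A^\times$ with $c^{-1}uc=\psi(u)$ for all $u\in P$; equivalently, $c$ normalises the canonical image of $P$ in $A^\times$ and induces $\psi$ there by conjugation.

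Second, I would invoke Puig's source-algebra characterisation of $\CF$ (cf.\ \cite[Proposition 14.9]{Puigmodules} and \cite[\S6.15]{LiBookII}): for subgroups $Q,R\le P$, an injective group homomorphism $\varphi\colon Q\to R$ lies in $\CF$ if and only if $\varphi$ is induced by conjugation with an element $c\in A^\times$ satisfying $c^{-1}Qc\le P$. Specialising this to $Q=R=P$ identifies $\Aut_\CF(P)$ with precisely those automorphisms of $P$ that are realised by $A^\times$-conjugation, matching exactly the condition obtained in the first step. The combination of the two steps gives the stated equivalence.

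The only substantive input is the second step; the first step is a direct unwinding of the definition of $A_\psi$. I do not foresee a genuine obstacle, since the fusion-theoretic interpretation of $A^\times$-conjugation on subgroups of $P$ is a foundational result in Puig's theory of source algebras and is already used implicitly elsewhere in the paper.
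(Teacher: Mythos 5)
Your argument is correct and, at bottom, relies on the same circle of results that the paper's one-line proof invokes: the paper simply quotes the equivalence of conditions (i) and (iii) in \cite[Theorem~8.7.4]{LiBookII} with $Q=R=P$ and $m=n=1_A$, whereas you re-derive one direction of that equivalence. Your first step (an $A$-$\OP$-bimodule isomorphism $A\cong A_\psi$ is the same datum as an element $c\in A^\times$ with $c^{-1}uc=\psi(u)$ for all $u\in P$) is a clean and correct unwinding of the definitions, and the semiperfectness argument for ``$c$ a unit'' is fine. Your second step is mathematically sound for $Q=R=P$ — one can see it by restricting $A\cong A_\psi$ to $\OP$-$\OP$-bimodules and applying the structure theorem for $\OP$-$\OP$-bimodule summands of a source algebra — but the references you cite for it are off: \cite[Proposition~14.9]{Puigmodules} and \cite[\S 6.15]{LiBookII} concern the embedding $L\to A$ and the groups $\Out_P(A)$, $\Out_P(L)$, not fusion detection via $A^\times$-conjugation. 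The statement you want is Puig's theorem on local fusion in source algebras, i.e.\ \cite{Puigloc}, or in the notation of this paper \cite[Theorem~8.7.1 or Theorem~8.7.4]{LiBookII}; with that correction the proof stands.
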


\begin{proof}
This is the special case of the equivalence of the statements (i) and (iii) 
in \cite[Theorem 8.7.4]{LiBookII}, applied to $Q=R=P$ and $m=n=1_A$.
\end{proof}

For further results  detecting fusion in source algebras
see \cite{Puigloc}, or also \cite[Section 8.7]{LiBookII}.

\section{Source algebra automorphisms and Green correspondence}
\label{SourceAut}

We use the notation and facts reviewed in Proposition \ref{LtoA}.
In particular, $A=$ $iBi$ and $L=jCj$ are source algebras of the block
$B$ of $\OG$ and its Brauer correspondent $C$, respectively, both 
associated with  a maximal Brauer pair $(P,e)$, and chosen 
such that multiplication by a primitive idempotent $f$ in $B^{N_G(P,e)}$ 
satisfying $\Br_p(f)e\neq$ $0$ induces an embedding $L\to A$ as
interior $P$-algebras. In particular, $i=jf$. This embedding is split 
as a homomorphism of $L$-$L$-bimodules. We set $N=$ $N_G(P)$. 
We keep this notation throughout this section. 

The following Proposition describes the Green correspondence at the 
source algebra level for certain invertible bimodules induced by 
automorphisms.

\begin{Proposition} \label{Greencorrbimod}
Let $\beta$ be an algebra automorphism of $L$ which extends to an 
algebra automorphism $\alpha$ of $A$ through the canonical embedding
$L\to A$. Then the $B$-$B$-bimodule
$$X = \OG i_\alpha \tenA i\OG$$
is invertible. As an $\CO(G\times G)$-module, $X$ has a vertex $Q$
contained in $P\times P$. The $\CO(N\times N)$-Green correspondent 
of $X$ with respect to $Q$  is isomorphic to 
$$Y = \ON j_\beta \tenL j\ON\ .$$
\end{Proposition}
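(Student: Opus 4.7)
The plan is to translate the problem to the source algebras $A$ and $L$ via the canonical Morita equivalences, exploit the split $L$-$L$-bimodule decomposition from Proposition \ref{LtoA}(ii), and then transfer the conclusion back to the block algebras. Invertibility of $X$ is immediate: $A_\alpha$ is invertible in $\Pic(A)$, and under the isomorphism $\Pic(A) \cong \Pic(B)$ from Remark \ref{sourcealgebraRemark} (implemented by the bimodules $Bi = \OG i$ and $iB = i\OG$), it maps to $X$. By Lemma \ref{PicAvertices}(iii), $X$ has a vertex $Q \leq P \times P$ whose two projections to $P$ are surjective; by Lemma \ref{Qnormaliser}(ii) the normaliser $N_{G\times G}(Q)$ is then contained in $N \times N$, so the Green correspondence with respect to $N\times N$ applies.

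The key algebraic step is a source-algebra computation. Applying the inverse Morita equivalences, one obtains $iXi \cong A_\alpha$ as an $A$-$A$-bimodule and $jYj \cong L_\beta$ as an $L$-$L$-bimodule. By Proposition \ref{LtoA}(ii), $A \cong \iota(L) \oplus A'$ as $L$-$L$-bimodule, where the indecomposable summands of $A'$ have $\CO(P\times P)$-vertices of order strictly less than $|P|$. Since $\alpha$ extends $\beta$ through $\iota$, i.e.\ $\alpha\circ\iota = \iota\circ\beta$, this decomposition passes through the $\beta$-twist on the right $L$-action, yielding
\[
A_\alpha \cong L_\beta \oplus A'_\beta
\]
as $L$-$L$-bimodules, with $A'_\beta$ still having all $\CO(P\times P)$-summands of vertex order $<|P|$. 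Combined with the source-preservation property recalled in Remark \ref{sourcealgebraRemark}, this forces the source of $X$ (an indecomposable $\CO Q$-summand of $\Res_Q^{P\times P}(A_\alpha)$ of vertex $Q$ of order $|P|$) to sit inside the $L_\beta$ part; an analogous analysis with $jYj \cong L_\beta$ handles $Y$, so $X$ and $Y$ share a common vertex $Q$ and a common source.

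To identify $X$ with the Green correspondent of $Y$, I would construct the natural $\OG$-$\OG$-bimodule homomorphism
\[
\psi : \Ind_{N\times N}^{G\times G}(Y) = \OG j \tenL L_\beta \tenL j\OG \longrightarrow \OG i \tenA A_\alpha \tenA i\OG = X
\]
by $(gj)\ten l \ten (jg') \mapsto (gi) \ten \iota(l) \ten (ig')$, using $i = jf = fj$ and $\iota(l) = lf = fl$. Well-definedness is a routine $L$-balance verification using $\alpha\iota = \iota\beta$, and surjectivity is immediate since the generators of $X$ lie in the image. By Green correspondence, $\Ind_{N\times N}^{G\times G}(Y)$ decomposes (up to isomorphism) as the Green correspondent of $Y$, say $X_0$, of vertex $Q$, together with a direct complement whose indecomposable summands all have vertex strictly smaller than $Q$. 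The main obstacle is to argue from the existence of the surjection $\psi$ that $X \cong X_0$: the plan is to combine the Krull--Schmidt theorem with the source-algebra decomposition of the previous paragraph, which rules out any strictly smaller-vertex summand of $\Ind Y$ accounting for the vertex-$Q$ module $X$, so that the component of $\psi$ coming from $X_0$ is forced to be an isomorphism, proving that $Y$ is indeed the $N\times N$-Green correspondent of $X$ with respect to $Q$.
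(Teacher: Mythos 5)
Your setup (invertibility of $X$ via the Morita equivalence, vertex $Q\leq P\times P$ with surjective projections via Lemma~\ref{PicAvertices}(iii), and $N_{G\times G}(Q)\leq N\times N$ via Lemma~\ref{Qnormaliser}) matches the paper, and your ``key algebraic step''
$A_\alpha\cong L_\beta\oplus A'_\beta$ as $L$-$L$-bimodules is precisely the paper's use of Proposition~\ref{LtoA}(ii) combined with the relation $\alpha\circ\iota=\iota\circ\beta$: it says $L_\beta$ is a direct summand of $A_\alpha$, and after multiplying out by the block algebras and using the orthogonal decomposition $j=i+(j-i)$ this becomes $Y\mid\Res^{G\times G}_{N\times N}(X)$. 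So the core insight is the right one.

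The gap is in how you try to finish. You construct a surjection $\psi:\Ind_{N\times N}^{G\times G}(Y)\to X$ and want to force the $X_0$-component to be an isomorphism by ``ruling out'' the smaller-vertex complement. This does not work as stated: a surjection need not split, and a module with strictly smaller vertex can certainly surject onto one with larger vertex (for instance, a projective module surjects onto everything). So surjectivity alone gives neither $X\mid\Ind(Y)$ nor any control over which Krull--Schmidt summand of $\Ind(Y)$ ``accounts for'' $X$. You are also only using your decomposition of $A_\alpha$ to read off a shared vertex and source for $X$ and $Y$, which is necessary but not sufficient for $X$ to be the Green correspondent of $Y$.

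The fix, which is the paper's route and requires no surjection, is to upgrade both relations to honest direct-summand statements. For $Y\mid\Res(X)$ you already have everything: $L_\beta\mid A_\alpha$ as $L$-$L$-bimodules, and after applying $j(-)j$ and the decomposition $j=i+(j-i)$ this yields $Y\mid\Res^{G\times G}_{N\times N}(X)$. For $X\mid\Ind(Y)$, use Proposition~\ref{LtoA}(iii): $A$ is a direct summand of $A\tenL A$ as $A$-$A$-bimodules, so tensoring on the left with $A_\alpha$ gives $A_\alpha\mid A_\alpha\tenL A = A_\beta\tenL A$, and hence $X\mid\Ind(Y)$. The first relation shows the vertex $Q$ of $Y$ divides into a vertex of $X$; the second shows a vertex of $X$ lies (up to conjugacy) in $Q$; so $Q$ is a vertex of $X$, and since $X$ is an indecomposable summand of $\Ind(Y)$ with vertex $Q$, it is the Green correspondent of $Y$.
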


\begin{proof}
The $A$-$A$-bimodule $A_\alpha$ is obviously invertible, and hence
so is the $B$-$B$-bimodule $X$, since $X$ is the image of $A_\alpha$
under the canonical Morita equivalence between $A\tenO A^\op$ and
$B\tenO B^\op$. Similarly, $L_\beta$ and $Y$ are invertible bimodules.
By Lemma \ref{PicAvertices}, $Y$ has a vertex $Q$ contained in $P\times P$
such that both canonical projections $P\times P\to$ $P$ map $Q$ onto
$P$. By Lemma \ref{Qnormaliser}, we have $N_{G\times G}(Q)\leq$
$N\times N$, so $Y$ has a well-defined Green correspondent. In
order to show that $X$ is this Green correspondent, we start by showing
that $X$ is isomorphic to a direct summand of 
$\Ind_{N\times N}^{G\times G}(Y)$. Rewrite
$$\Ind_{N\times N}^{G\times G}(Y) = \OG j_\beta \tenL j \OG\ .$$
Decompose $j=i + (j-i)$; since $i=jf$, this is an orthogonal decomposition
of $j$ into two idempotents both of which commute with $L$; that is,
$\OG i_\beta \tenL i \OG$ is isomorphic to a direct summand
of $\OG j_\beta \tenL j \OG$. We show that $X$ is isomorphic
to a direct summand of $\OG i_\beta \tenL i\OG$. Multiplying both sides
by $i$, this is equivalent to showing that $A_\alpha$ is isomorphic to
a direct summand of $A_\beta \tenL A$. Now $A$ is isomorphic to a
direct summand of $A\tenL A$ (cf. Proposition \ref{LtoA}).
Tensoring on the left with $A_\alpha$ shows that $A_\alpha$ is isomorphic 
to a direct summand of $A_\alpha\tenL A=$ $A_\beta\tenL A$, where the
last equality holds since $\alpha$ extends $\beta$. This shows that
$X$ is indeed isomorphic to a direct summand of 
$\Ind^{G\times G}_{N\times N}(Y)$, and therefore $X$ has a subgroup of 
$Q$ as a vertex.
In order to show that $X$ is the Green correspondent of $Y$, we need to
show that $Q$ is a vertex of $X$. It suffices to show that $Y$ is 
isomorphic to a direct summand of $\Res^{G\times G}_{N\times N}(X)=$
$\OG i_\alpha\tenA i\OG$. Thus it suffices to show that $L_\beta$
is isomorphic to a direct summand of $j\OG i_\alpha\tenA i\OG j$.
Using as before the decomposition $j=i+(j-i)$, it suffices to show that
$L_\beta$ is isomorphic to a direct summand of $A_\alpha$,
as an $L$-$L$-bimodule. By Proposition \ref{LtoA} (ii), $L$ is isomorphic 
to a direct summand of $A$. The claim now follows by tensoring on the 
right with $L_\beta$ and noting that $A\tenL L_\beta\cong A_\alpha$ 
as $L$-$L$-bimodules.
\end{proof}

As before, we denote by $\CE(B)$, $\CL(B)$, $\CT(B)$ the
subgroups of $\Pic(B)$ represented by invertible bimodules whose sources 
are endopermutation, linear, or trivial, respectively. We denote by
$\CE(A)$, $\CL(A)$, $\CT(A)$ the subgroups of $\Pic(A)$ which correspond
to $\CE(B)$, $\CL(B)$, $\CT(B)$, respectively, under the canonical group 
isomorphism $\Pic(B)\cong$ $\Pic(A)$ (cf. Remark \ref{sourcealgebraRemark}).
Summarising special cases of results in \cite[6.7]{Puigbook},
\cite[Section 2]{BoKL}, the bimodules representing elements in $\CT(A)$ 
can be described as follows.

\begin{Proposition}\label{TA-Prop3}
An invertible $A$-$A$-bimodule $M$ represents an element in $\CT(A)$ 
if and only if $M$ is isomorphic to a direct summand of an 
$A$-$A$-bimodule of the form 
$$A_\varphi \tenOP A$$
for some $\varphi\in$ $\Aut(P,\CF)$. Moreover, the class of $\varphi$
in $\Out(P,\CF)$ is uniquely determined by the isomorphism class of
$M$, and the map $M\mapsto \varphi$ induces a group
homomorphism $\CT(A) \to \Out(P,\CF)$.  This group homomorphism
corresponds to the group homomorphism $\CT(B)\to$ $\Out(P,\CF)$
in \cite[Theorem 1.1.(ii)]{BoKL} through the canonical isomorphism
$\CT(B)\cong$ $\CT(A)$.
\end{Proposition}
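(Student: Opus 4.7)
The plan is to translate the problem via the canonical Morita isomorphism $\Pic(B)\cong\Pic(A)$ of Remark \ref{sourcealgebraRemark} and then invoke the structural results from \cite[Section 2]{BoKL} together with the source algebra description in Proposition \ref{LtoA}. The central technical ingredients are Lemmas \ref{diagLemma}, \ref{PicAvertices}, and \ref{fusionLemma}.

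For the 'if' direction, I would fix $\varphi\in\Aut(P,\CF)$. By Lemma \ref{diagLemma}, the $\OP$-$\OP$-bimodule $(\OP)_\varphi$ is isomorphic to $\Ind_{\Delta_\varphi P}^{P\times P}(\CO)$, where $\Delta_\varphi P=\{(\varphi(u),u)\mid u\in P\}$, so it has trivial source with vertex $\Delta_\varphi P$. Writing $A_\varphi\tenOP A\cong A\tenOP(\OP)_\varphi\tenOP A$, the induction from $\OP$-$\OP$-bimodules up to $A$-$A$-bimodules preserves the trivial source property, so any indecomposable direct summand of $A_\varphi\tenOP A$ has trivial source.

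For the 'only if' direction, I would start with $M\in\CT(A)$. Lemma \ref{PicAvertices}(ii) produces an indecomposable direct summand of $M$ as an $\OP$-$\OP$-bimodule with vertex $Q\leq P\times P$ projecting onto $P$ under both coordinate projections. Since $M$ has trivial source this summand is $\Ind_Q^{P\times P}(\CO)$, and I would argue that one can choose such a summand with $|Q|=|P|$: the invertibility of $M$ combined with the structure of $A$ as an $\OP$-$\OP$-bimodule from Proposition \ref{LtoA}(ii),(iii) (the $\OP$-$\OP$-summands of vertex of order exactly $|P|$ all come from the subalgebra $L$) forces this conclusion. Then $Q=\Delta_\varphi P$ for some $\varphi\in\Aut(P)$, and $\varphi\in\Aut(P,\CF)$ follows from Puig's fusion-detection in source algebras (compare Lemma \ref{fusionLemma}) applied to the source algebra $A\tenO A^\op$ of the block $B\tenO B^\op$ of $\CO(G\times G)$. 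Finally, since $M$ is indecomposable as an $A$-$\OP$-bimodule by Lemma \ref{PicAvertices}(i) and has vertex $\Delta_\varphi P$, it is a direct summand of $A\tenOP(\OP)_\varphi\cong A_\varphi$ as an $A$-$\OP$-bimodule; tensoring on the right with $A$ and using $M\cong M\tenA A$ exhibits $M$ as a direct summand of $A_\varphi\tenOP A$.

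Uniqueness of $[\varphi]\in\Out(P,\CF)$ follows from Lemma \ref{fusionLemma}: $A_\varphi\cong A_{\varphi'}$ as $A$-$\OP$-bimodules iff $\varphi'\varphi^{-1}\in\Aut_\CF(P)$. The group homomorphism property comes from $(A_\varphi\tenOP A)\tenA(A_{\varphi'}\tenOP A)\cong A_\varphi\tenOP A\tenOP A_{\varphi'}$ together with Proposition \ref{LtoA}(iii), which exhibits $A_{\varphi'\varphi}\tenOP A$ (up to composition conventions) as a direct summand. Compatibility with the homomorphism of \cite[Theorem 1.1.(ii)]{BoKL} reduces to tracing both constructions through the Morita isomorphism $\CT(B)\cong\CT(A)$. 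The main obstacle I anticipate is the rank/vertex calculation forcing $|Q|=|P|$ together with the verification $\varphi\in\Aut(P,\CF)$; both are standard in Puig's source algebra framework but require careful bimodule bookkeeping.
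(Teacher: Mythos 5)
The paper's proof of Proposition \ref{TA-Prop3} is essentially a citation: it appeals to \cite[Theorem 2.4]{BoKL} (a reformulation of Puig \cite[7.6]{Puigbook}) for the existence of $\varphi$, to \cite[Lemma 2.7]{BoKL} for uniqueness of $[\varphi]$, and to \cite[Lemma 2.6]{BoKL} for the homomorphism property. Your proposal instead attempts a self-contained argument, and while the overall shape is right, it has genuine gaps at precisely the two points that the citations to \cite{BoKL} are carrying.

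First, the claim that one can take $|Q|=|P|$ (so that $Q=\Delta_\varphi P$ for some $\varphi\in\Aut(P)$) is not established. Lemma \ref{PicAvertices}(ii),(iii) gives only that both projections of $Q$ onto $P$ are surjective, hence $|Q|\geq |P|$; the parenthetical remark about the $\OP$-$\OP$-summands of $A$ coming from $L$ is a statement about $A$, not about $M$, and does not by itself bound $|Q|$ from above. That the vertex of an invertible ($p$-permutation) bimodule over a block algebra is a twisted diagonal subgroup is a theorem of Puig, recorded in this paper's framework as \cite[Lemma 1.1]{BoKL} and invoked explicitly in the proof of Theorem \ref{LPicnormal}; you need it here too. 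Second, the assertion that $\varphi\in\Aut(P,\CF)$ is waved at with a reference to Lemma \ref{fusionLemma} applied to $A\tenO A^\op$. Lemma \ref{fusionLemma} detects membership in $\Aut_\CF(P)$, not in $\Aut(P,\CF)$, and passing to $A\tenO A^\op$ does not convert one into the other. What is really needed is that a Morita equivalence between block algebras given by a $p$-permutation bimodule preserves fusion systems (Puig, Scott), which is exactly the content packaged into \cite[Theorem 2.4]{BoKL}. Finally, a minor point: in the group-homomorphism step you invoke Proposition \ref{LtoA}(iii), which is about $A\ten_L A$; what you actually use is that $A$ is a direct summand of $A\tenOP A$ as an $A$-$A$-bimodule, an elementary relative-projectivity fact, so the citation is misplaced. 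The ``if'' direction and the derivation of uniqueness and the homomorphism property from $M\cong A_\varphi$ as $A$-$\OP$-bimodules are fine once the two missing ingredients above are supplied.
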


\begin{proof}
The fact that the elements in $\CT(A)$ are represented by bimodules
as stated follows from the reformulation \cite[Theorem 2.4]{BoKL}
of results of Puig \cite[7.6]{Puigbook} together with the canonical 
Morita equivalence between $B$ and $A$. The statement on the uniqueness 
of the class of $\varphi$ in $\Out(P,\CF)$ follows from 
\cite[Lemma 2.7]{BoKL}. The fact that this yields a group homomorphism 
$\CT(A)\to$ $\Out(P,\CF)$ follows from \cite[Lemma 2.6]{BoKL}. By 
construction, this yields the group homomorphism $\CT(B)\to$ 
$\Out(P,\CF)$ in \cite[Theorem 1.1. (ii)]{BoKL} when precomposed with 
the canonical isomorphism $\CT(B)\cong$ $\CT(A)$. 
\end{proof}

The following characterisation of $A$-$A$-bimodules in Proposition 
\ref{TA-Prop1} (i) below representing elements in $\CT(A)$ is essentially a 
reformulation of work  of L. L. Scott \cite{Scottnotes} and L. Puig 
\cite{Puigbook}, where it is shown that Morita equivalences between 
block algebras given by $p$-permutation bimodules are induced by 
source algebra isomorphisms. As in Proposition \ref{TA-Prop3}, the 
homomorphism $\CT(A)\to \Out(P,\CF)$ in Proposition 
\ref{TA-Prop1} (ii) corresponds to the one at the bottom of the 
diagram in \cite[Theorem 1.1]{BoKL}. 

\begin{Proposition}\label{TA-Prop1}
With the notation above, the following hold.

\begin{enumerate}
\item[{\rm (i)}]
An invertible $A$-$A$-bimodule $M$ represents an element in $\CT(A)$
if and only if $M\cong$ $A_\alpha$ for some $\CO$-algebra 
automorphism $\alpha$ of $A$ which preserves the image of $P$ in
$A^\times$. In particular, $\CT(A)$ is a subgroup of the image
of $\Out(A)$ in $\Pic(A)$. 

\item[{\rm (ii)}]
Let $\varphi\in$ $\Aut(P)$ and let $\alpha$ be an $\CO$-algebra 
automorphism of $A$ which extends $\varphi$.
Then $\varphi\in$ $\Aut(P,\CF)$, and the map
$\alpha\mapsto\varphi$ induces a group homomorphism 
$$\CT(A)\to \Out(P,\CF)$$ 
with kernel $\Out_P(A)$.
\end{enumerate}
\end{Proposition}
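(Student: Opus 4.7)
For the forward direction of (i), I would use a permutation-basis argument. By Remark \ref{sourcealgebraRemark}, $A = iBi$ is a $p$-permutation $\CO(P \times P)$-module, so we may fix a $P \times P$-stable $\CO$-basis $\CB$ of $A$. If $\alpha \in \Aut(A)$ preserves the image of $P$ with $\varphi = \alpha|_P \in \Aut(P)$, then the twisted $P \times P$-action on $A_\alpha$ sends $b \in \CB$ to $u\,b\,\varphi(v)^{-1}$, which again lies in $\CB$ because $(u,\varphi(v)) \in P \times P$ already acts on $\CB$ via the original action. Hence $A_\alpha$ is a $p$-permutation bimodule, and $[A_\alpha] \in \CT(A)$.

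For (ii), I would first verify that any $\alpha \in \Aut(A)$ extending some $\varphi \in \Aut(P)$ must satisfy $\varphi \in \Aut(P,\CF)$: the fusion system $\CF$ is read off the interior $P$-algebra structure of $A$ (morphisms of $\CF$ arise as conjugations by elements of $A^\times$ that take a subgroup of $P$ into $P$), so $\alpha$ transports such elements to new ones, forcing $\varphi \circ \psi \circ \varphi^{-1} \in \CF$ for every $\psi \in \CF$, and hence $\varphi$ stabilises $\CF$. The assignment $\alpha \mapsto \varphi$ then passes to a well-defined map $\CT(A) \to \Out(P,\CF)$: if $\alpha_2 = c\alpha_1 c^{-1}$ with both $\alpha_i$ preserving $P$, then $c$ normalises $P$ in $A^\times$, so conjugation by $c$ restricted to $P$ lies in $\Aut_\CF(P)$, and the images in $\Out(P,\CF)$ coincide. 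For the kernel, $[A_\alpha]$ is trivial in $\Out(P,\CF)$ exactly when $\varphi \in \Aut_\CF(P)$, in which case some $c \in A^\times$ normalising $P$ realises $\varphi$ by conjugation, and $\alpha$ composed with conjugation by $c^{-1}$ lies in $\Aut_P(A)$ and represents the same class in $\Pic(A)$. Conversely every element of $\Aut_P(A)$ clearly maps trivially. Finally, the inner automorphisms of $A$ fixing $P$ pointwise are exactly those implemented by elements of $(A^P)^\times$, precisely the subgroup quotiented out to form $\Out_P(A)$; this gives $\Out_P(A) \hookrightarrow \Pic(A)$ with image equal to the kernel.

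For the converse direction of (i), let $M \in \CT(A)$ and let $\varphi \in \Out(P,\CF)$ be the class produced by Proposition \ref{TA-Prop3}. The key step, and the main technical obstacle, is to lift some representative of $\varphi$ to an algebra automorphism $\alpha \in \Aut(A)$. I would approach this through Lemma \ref{autom-extend} applied to $\OP \hookrightarrow A$, using the indecomposability of $M$ as an $A$-$\OP$-bimodule (Lemma \ref{PicAvertices} (i)) and the fusion criterion Lemma \ref{fusionLemma} to produce the required $A$-$\OP$-bimodule isomorphism. This is essentially the source-algebra reformulation of the Scott-Puig theorem that every $p$-permutation Morita self-equivalence of a block algebra is induced by a source-algebra automorphism. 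Once such an $\alpha$ is in hand, both $M$ and $A_\alpha$ represent elements of $\CT(A)$ with image $\varphi$ in $\Out(P,\CF)$, so the product $[M]\cdot [A_\alpha]^{-1}$ lies in the kernel from part (ii) and equals $[A_\gamma]$ for some $\gamma \in \Aut_P(A)$. Consequently $M \cong A_{\gamma\alpha}$, and $\gamma\alpha \in \Aut(A)$ preserves $P$, completing the proof of (i).
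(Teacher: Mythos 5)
Your forward direction of (i) (permutation-basis argument showing $A_\alpha$ is a $p$-permutation bimodule when $\alpha$ preserves $P$) is exactly the paper's, and your remarks on well-definedness in (ii) are in the right spirit, though the paper derives $\varphi\in\Aut(P,\CF)$ and the homomorphism directly from Proposition~\ref{TA-Prop3} (i.e.\ from \cite{BoKL}) rather than from a fusion-transport argument. The genuine gap is in the converse direction of (i). The core technical content --- producing the $A$-$\OP$-bimodule isomorphism $M\cong A_\varphi$, or equivalently lifting $\varphi$ to $\Aut(A)$ --- is precisely what you label the ``main technical obstacle'' and then leave unresolved: you gesture at Lemmas~\ref{autom-extend}, \ref{PicAvertices} and \ref{fusionLemma} but do not supply the decomposition argument. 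The paper's proof is substantive here: start from Proposition~\ref{TA-Prop3} to write $M$ as a summand of $A_\varphi\tenOP A$, use Krull--Schmidt and the indecomposability of $M$ as an $A$-$\OP$-bimodule (Lemma~\ref{PicAvertices}(i)) to reduce to $M\mid A_\varphi\tenOP W$ for a single indecomposable $\OP$-$\OP$-summand $W$ of $A$, invoke the classification $W\cong\OP_\tau\tenOQ\OP$ with $\tau\in\Hom_\CF(Q,P)$ (\cite[Thm 8.7.1]{LiBookII}), force $Q=P$ via Lemma~\ref{PicAvertices}(ii), and absorb the twist via Lemma~\ref{fusionLemma}. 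Only then do you get $M\cong A_\varphi$ as $A$-$\OP$-bimodules, whence $M\cong A$ as left $A$-modules, $M\cong A_\alpha$ for some $\alpha\in\Aut(A)$, and Lemma~\ref{autom-extend} lets you choose $\alpha$ extending $\varphi$.

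Moreover, your proposed workaround --- ``once such an $\alpha$ is in hand,'' use the kernel identification from (ii) to conclude $[M]\cdot[A_\alpha]^{-1}=[A_\gamma]$ with $\gamma\in\Aut_P(A)$ --- is circular. Your argument for the kernel in (ii) only treats elements of $\CT(A)$ already known to be of the form $[A_\alpha]$; it does not show that an arbitrary class in $\CT(A)$ mapping to the identity in $\Out(P,\CF)$ lies in $\Out_P(A)$. That is exactly what part (i) asserts, so you cannot appeal to (ii)'s kernel statement to finish (i) unless you import it independently from \cite[Theorem 1.1]{BoKL}, which is what the paper does. Note also that the paper does not lift $\varphi$ first and then compare with $M$: the automorphism $\alpha$ is extracted from the bimodule isomorphism $M\cong A$, and only afterward adjusted (via Lemma~\ref{autom-extend}) to extend $\varphi$.
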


\begin{proof}
Note that $A$ is a permutation $\OP$-$\OP$-bimodule. Thus if
$\alpha\in$ $\Aut(A)$ preserves the image of $P$ in $A^\times$, then
$A_\alpha$ is again a permutation $\OP$-$\OP$-bimodule. Therefore
$A_\alpha$ represents in that case an element in $\CT(A)$. 
For the converse, let $M$ be an invertible $A$-$A$-bimodule which
represents an element in $\CT(A)$. By Proposition \ref{TA-Prop3},
there is $\varphi\in$ $\Aut(P,\CF)$ such that $M$ is isomorphic to a 
direct summand of $A_\varphi \tenOP  A$. By Lemma \ref{PicAvertices},
the restriction of $M$ as an $A$-$\OP$-bimodule remains indecomposable.
Thus, using Krull-Schmidt, as an $A$-$\OP$-bimodule, $M$ is 
isomorphic to a direct summand of $A_\varphi \tenOP W$ for some
indecomposable direct summand $W$ of $A$ as an $\OP$-$\OP$-bimodule.
By \cite[Theorem 8.7.1]{LiBookII}, we have $W\cong$ 
$OP_\tau \tenOQ OP$ for some subgroup $Q$ of $P$ and some 
$\tau\in$ $\Hom_\CF(Q,P)$. By Lemma \ref{PicAvertices} (ii), we have $Q=P$,
and hence $\tau\in$ $\Aut_\CF(P)$ and $W=$ $\OP_\tau$. Thus $M$ 
is isomorphic to a direct summand of $A_\varphi \tenOP \OP_\tau\cong$
$A_{\varphi\circ\tau} \cong$ $A_{{}^\varphi\tau\circ\varphi} \cong$ 
$A_\varphi$, where the last isomorphism uses Lemma 
\ref{fusionLemma} and the fact that ${}^\varphi\tau\in$ $\Aut_\CF(P)$.   
But then $M$, as an $A$-$\OP$-module, is isomorphic to $A_\varphi$, 
since, by Lemma \ref{PicAvertices} (i), this 
module is indecomposable. In particular, $M\cong$ $A$ as a left 
$A$-module. Thus $M\cong$ $A_\alpha$ for some $\alpha\in$ $\Aut(A)$. 
By Lemma \ref{autom-extend} we can choose $\alpha$ to extend 
$\varphi$.

The fact that we have a group homomorphism $\CT(A)\to \Out(P,\CF)$ 
follows from Proposition \ref{TA-Prop3} and that its kernel is 
$\Out_P(A)$ from identifying it with the corresponding homomorphism 
in \cite[Theorem 1.1]{BoKL}.
\end{proof}

The next result shows that in the situation of Proposition \ref{TA-Prop1}
(ii)  it is possible to choose $\alpha$ in such a way that 
it preserves the subalgebra $L=$ $\CO_\tau(P\rtimes E)$. 

\begin{Proposition} \label{TA-Prop2}
With the notation above, let $\varphi\in$ $\Aut(P)$ such that $\varphi$
extends to an $\CO$-algebra automorphism $\alpha$ of $A$. Then 
$\varphi$  extends to an $\CO$-algebra automorphism $\alpha'$ of $A$ 
such that the images of $\alpha$ and $\alpha'$ in $\Out(A)$ are equal 
and such that $\alpha'$ preserves the subalgebra $L$.
The correspondence $\alpha\mapsto$ $\alpha'|_{L}$
induces an injective  group homomorphism
$\rho : \CT(A) \to \CT(L)$, and we have a commutative diagram of
finite groups with exact rows of the form
$$\xymatrix{1 \ar[r] & \Out_P(A) \ar[r] \ar[d] 
& \CT(A) \ar[r] \ar[d]^{\rho} & \Out(P,\CF) \ar[d] \\
1 \ar[r] & \Hom(E,k^\times) \ar[r] & \CT(L) \ar[r] & \Out(P,N_\CF(P))}$$
where the leftmost vertical map is from Proposition \ref{LtoA} (v), 
after identifying $\Hom(E,k^\times)$ with $\Out_P(L)$ via Proposition 
\ref{LtoA} (iv), the rightmost horizontal arrows are those from 
Proposition \ref{TA-Prop1}, and the right vertical map is the inclusion.
\end{Proposition}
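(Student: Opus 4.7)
The plan is to show that every class in $\CT(A)$ has a representative $A_\alpha$ for some $\alpha \in \Aut(A)$ which preserves the subalgebra $L$; the map $\rho$ will then be defined by sending the class of $A_\alpha$ to the class of $L_{\alpha|_L}$ in $\CT(L)$. Once this reduction is in place, the commutativity of the diagram and the injectivity of $\rho$ will follow by routine book-keeping and a diagram chase.

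The crux of the argument, and the main obstacle, is the following. Given $\alpha \in \Aut(A)$ extending some $\varphi \in \Aut(P,\CF)$, as supplied by Proposition~\ref{TA-Prop1}(ii), I would produce an element $c \in (A^P)^\times$ satisfying $\alpha(L) = c L c^{-1}$. Once such a $c$ is available, the modified automorphism $\alpha'(a) := c^{-1}\alpha(a)c$ lies in the same class as $\alpha$ in $\Out(A)$, still extends $\varphi$ (because $c$ centralises $P$), and preserves $L$ by construction. To produce $c$, the idea is to apply $\alpha$ to the $L$-$L$-bimodule decomposition $A = L \oplus M$ from Proposition~\ref{LtoA}(ii), yielding a decomposition $A = \alpha(L) \oplus \alpha(M)$ as an $\alpha(L)$-$\alpha(L)$-bimodule. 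Since $\alpha|_P = \varphi$ is an automorphism of $P$, $\alpha$ carries twisted diagonal subgroups of $P \times P$ to twisted diagonal subgroups of the same order, so the summands of $\alpha(M)$ retain the small-vertex property. Hence $\alpha(L) \subseteq A$ satisfies all the defining properties of the Puig embedding of $L$ into $A$ (with $L$ identified with $\alpha(L)$ via $\alpha|_L$). Puig's uniqueness statement, to the effect that two such embeddings are conjugate by an element of $(A^P)^\times$ (as afforded by the references cited in Proposition~\ref{LtoA}), then supplies the desired $c$.

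The remaining steps are routine. Lemma~\ref{ALAut}(i) guarantees that the class of $\beta := \alpha'|_L$ in $\Out(L)$ depends only on the class of $\alpha$ in $\Out(A)$, so $\rho$ is well-defined on $\CT(A)$; moreover $L_\beta$ lies in $\CT(L)$ by Proposition~\ref{TA-Prop1}(i) applied to the source algebra $L$. Multiplicativity of $\rho$ is immediate, since the composition of two $L$-preserving automorphisms still preserves $L$ and restricts on $L$ to the composition of the restrictions. The right square of the diagram commutes because $\alpha'$ and $\beta$ extend the same $\varphi$; the left square commutes because choosing $c \in (A^P)^\times$ ensures that $\alpha \in \Aut_P(A)$ yields $\beta \in \Aut_P(L)$, corresponding via Proposition~\ref{LtoA}(iv) to the appropriate element of $\Hom(E,k^\times)$. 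For injectivity: if $\rho([A_\alpha]) = 1$ in $\CT(L)$, then by the commutativity of the right square and the injectivity of the inclusion $\Out(P,\CF) \hookrightarrow \Out(P,N_\CF(P))$, the image of $[A_\alpha]$ in $\Out(P,\CF)$ is trivial, so $[A_\alpha] \in \Out_P(A)$ by the exactness of the top row; then the restriction of $\rho$ to $\Out_P(A)$, which equals the injective map of Proposition~\ref{LtoA}(v), forces $[A_\alpha]$ to be trivial.
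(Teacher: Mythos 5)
The overall structure of your proposal matches the paper's — reduce to showing $\alpha(L)=cLc^{-1}$ for some $c\in(A^P)^\times$, then conjugate — but the justification you offer for the existence of $c$ has a genuine gap. You want to cite a ``Puig uniqueness statement'' to the effect that any subalgebra of $A$ satisfying the conclusions of Proposition~\ref{LtoA}(ii) is conjugate to $L$ by an element of $(A^P)^\times$, and you identify $\alpha(L)$ with $L$ via $\alpha|_L$. The trouble is that $\alpha|_L\colon L\to\alpha(L)$ is \emph{not} a homomorphism of interior $P$-algebras: it sends $u\in P$ to $\varphi(u)$, so it is twisted by $\varphi$. The uniqueness results underlying Proposition~\ref{LtoA} (Puig, Fan--Puig, K\"ulshammer) concern interior $P$-algebra embeddings, i.e.\ maps commuting with the structural map from $P$, and they do not directly apply to the pair $(L,\alpha(L))$ unless one first exhibits an \emph{interior} $P$-algebra isomorphism $L\cong\alpha(L)$.

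Producing such an isomorphism is precisely where the work is hidden. Concretely, with $L\cong\CO_\tau(P\rtimes E)$, one must show that the automorphism $\psi$ of $E$ induced by $\varphi$ fixes the class of the $2$-cocycle $\tau$ in $H^2(E,k^\times)$; without this, there is no reason for $\alpha(L)$, with the interior $P$-structure given by inclusion, to be isomorphic to $L$. The paper's explicit Maschke averaging argument establishes exactly this $\psi$-stability of $\tau$ as a byproduct (the computation with the scalars $\lambda_y$) and simultaneously produces the conjugating element $w\in 1+J(A^P)$. Your abstract appeal to rigidity does not supply the $\psi$-stability and therefore does not self-evidently supply $c$; making the rigidity argument precise would require essentially the same computation the paper performs directly. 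Your remaining steps (well-definedness and multiplicativity of $\rho$ via Lemma~\ref{ALAut}, commutativity of the squares, and injectivity by a diagram chase using the exact rows and Proposition~\ref{LtoA}(v)) are sound; the paper's injectivity argument is slightly more self-contained, using Proposition~\ref{LtoA}(iii) and Lemma~\ref{PicAvertices}(ii) rather than a chase, but both are valid once the existence of $\alpha'$ is in hand.
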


\begin{proof}
In order to prove the first statement, we need to show that $\alpha(L)$
is conjugate to $L$ via an element $w$ in $(A^P)^\times$. 
This proof is based on a `Maschke type' argument, constructing
$w$ explicitly. This is a well-known strategy;  
see e.g. \cite[Remark 4.4]{HeKi},  \cite[Proposition 4]{LiSchroll}. 

Note that any inner automorphism of $P$ extends trivially  to an algebra 
automorphism of $A$.
Since $\varphi$ extends to an algebra automorphism $\alpha$ of $A$, it 
follows that any $\varphi'\in$ $\Aut(P)$ representing the same class as 
$\varphi$ in $\Out(P)$ extends to an algebra automorphism of $A$ 
representing the same class as $\alpha$ in $\Out(A)$. Therefore, in 
order to prove Proposition \ref{TA-Prop2}, we may replace $\varphi$ by 
any automorphism of $P$ representing the same class as $\varphi$ in 
$\Out(P)$.

We identify $P\rtimes E$ as a subset of $L=\CO_\tau(P\rtimes E)$, hence
of $A$. Note that this is a subset of $A^\times$, but not a subgroup,
because of the twist of the multiplication by $\tau$. In particular, the 
inverse $x^{-1}$ in the group $P\rtimes E$ of an element $x\in$ 
$P\rtimes E$ is in general different from the inverse of $x$ in the 
algebra $L$. More precisely, the inverses of $x$ in the group $P\rtimes E$
and in the algebra $L$ differ by a scalar. 

For group elements  $x$, $y\in$ $P\rtimes E$, we denote by $xy$ the 
product in the group $P\rtimes E$, and by $x\cdot y$ the product in the 
algebra $L$; that is, we have 
$$x\cdot y = \tau(x,y)xy\ .$$
We denote by ${^yx}$ the conjugate of $x$ by $y$ in the group 
$P\rtimes E$. By the above, this differs by a scalar from the conjugate of 
$x$ by $y$ in $A^\times$. 

Let $\varphi\in$ $\Aut(P)$ and $\alpha\in$ $\Aut(A)$ such that
$\alpha$ extends $\varphi$. By Proposition \ref{TA-Prop1} we have
$\varphi\in$ $\Aut(P,\CF)$. In particular, $\varphi$ normalises
the group $\Aut_\CF(P)=$ $\Inn(P)\cdot E$. Then 
$\varphi\circ E \circ\varphi^{-1}$ is a complement of $\Inn(P)$ in $\Inn(P)\cdot E$,
so conjugate to $E$ by an element in $\Inn(P)$ by the Schur-Zassenhaus
theorem. That is, after possibly replacing $\varphi$ by another 
representative in $\Aut(P)$ of the class of $\varphi$ in $\Out(P)$, we 
may assume that $\varphi$ normalises the subgroup $E$ of $\Aut(P)$.

Let $y\in$ $E$ (regarded as an automorphism of $P$). 
Since $\varphi$ normalises $E$, there is an element $\psi(y)\in$ $E$ 
such that
$$\varphi\circ y\circ\varphi^{-1} = \psi(y)\ .$$
That is,  $\psi$ is the group  automorphism of $E$ induced
by conjugation with $\varphi$ in $\Aut(P)$.  

In what follows we denote by $\psi(y)^{-1}$ the inverse of $\psi(y)$ 
in the subalgebra $L$ of $A$; by the above, this may differ from the 
group theoretic inverse of $\psi(y)$ in $E$  by a scalar in $\CO^\times$. 
The elements $\alpha(y)$ and $\psi(y)$ in $A^\times$ act in the same 
way on the image of $P$ in $A$ up to scalars in $\CO^\times$. That is,
conjugation by $\alpha(y)\psi(y)^{-1}$ in $A^\times$ sends $u\in$ $P$ 
to $\zeta(u)u$ for some scalar $\zeta(u)\in$ $\CO^\times$. The map 
$u\mapsto \zeta(u)$ is then a group homomorphism from $P$ to 
$\CO^\times$. It follows from \cite[Lemma 3.9]{Lifocal} that $\zeta(u)=1$ 
for all $u\in $ $P$. This shows that  $\alpha(y)\psi(y)^{-1}$ belongs to 
$(A^P)^\times$. 
Since conjugation by the elements $\alpha(y)$, $\psi(y)$ in $A^\times$ 
preserves $\OP$, these conjugations also preserve the centraliser 
$A^P$ of $\OP$ in $A$. In other words, $\alpha(y)$ and $\psi(y)$ 
normalise the subgroups $(A^P)^\times$ and $1+J(A^P)$ of 
$A^\times$.

Since $k$ is perfect, we have a canonical group isomorphism
$\CO^\times\cong$ $k^\times\times(1+J(\CO))$. 
Now $A^P$ is a local algebra, so $(A^P)^\times=$ $k^\times(1+J(A^P))$, 
or equivalently, every element in $(A^P)^\times$ can be 
written uniquely in the form $\lambda\cdot 1_A+r$ for some 
$\lambda\in$ $k^\times$ (with $k^\times$ identified to its canonical 
preimage in $\CO^\times$) and some $r\in$ $J(A^P)$. Thus 
$$\alpha(y)\psi(y)^{-1} = \lambda_y + r_y$$
for a uniquely determined $\lambda_y\in$ $k^\times$ and $r_y\in$ 
$J(A^P)$.  It follows that $\lambda_y^{-1}\alpha(y)\psi(y)^{-1}\in$ 
$1+J(A^P)$.  Set 
$$w = \frac{1}{|E|}\sum_{y\in E}\ 
\lambda_y^{-1}\alpha(y)\psi(y)^{-1}\ .$$
This is well defined since $|E|$ is prime to $p$. 
By construction, we have $w\in$ $1+J(A^P)$, so in particular, $w$ is
invertible in $A^P$, and conjugation by $w$ fixes the elements of $P$,
hence preserves $\OP$.  
Therefore, in order to show that $\alpha(L)=$ $wLw^{-1}$, it 
suffices to show that for any $y\in$ $E$, the element $\alpha(y)$ is
a scalar multiple of the conjugate $w\psi(y)w^{-1}$.  More
precisely, we are going to show that
$$\alpha(y)w= \lambda_y w\psi(y)\ .$$ 
For any further element $x\in$ $E$, applying $\alpha$ to the
equation $y\cdot x=$ $\tau(y,x) yx$ yields 
$$\alpha(y) \alpha(x)=\tau(y,x)\alpha(yx)\ .$$ 
Similarly, we have 
$$\psi(y)\cdot \psi(x) = \tau(\psi(y),\psi(x)) \psi(yx)\ .$$
We show next that the $2$-cocycles $\tau$ and
$\tau(\psi(-), \psi(-))$ in $Z^2(E,k^\times)$ represent the same class, 
via the $1$-cochain $y\mapsto \lambda_y$. By construction, $\alpha(y)$ 
and  $\lambda_y\psi(y)$ differ by an element in $1+J(A^P)$. 
Calculating modulo $1+J(A^P)$ in the two previous equations yields
$$\tau(\psi(y),\psi(x)) = \lambda_y^{-1}\lambda_x^{-1}\lambda_{yx}
\tau(y,x)\ .$$
In other words, the class of $\tau$ is stable under $\psi$.

Using these equations,  we have
$$\alpha(y) w = 
\frac{1}{|E|} \sum_{x\in E} \lambda_x^{-1}\alpha(y) \alpha(x)\psi(x)^{-1} =
\frac{1}{|E|} \sum_{x\in E} \lambda_x^{-1} \tau(y,x) \alpha(yx) 
\tau(\psi(y),\psi(x))^{-1} \psi(yx)^{-1}\psi(y) =$$
$$\frac{1}{|E|} \sum_{x\in E} \lambda_x^{-1} \tau(y,x) \alpha(yx) 
\lambda_x\lambda_y\lambda_{yx}^{-1} \tau((y, x)^{-1} \psi(yx)^{-1}\psi(y) =$$
$$\frac{1}{|E|} \sum_{x\in E} \lambda_y\lambda_{yx}^{-1} 
\alpha(yx)\psi(yx)^{-1} \psi(y) = \lambda_y w\psi(y)\ .$$
This shows that $\alpha(L)=$ $wLw^{-1}$. Thus setting 
$$\alpha'= c_{w^{-1}}\circ\alpha\ , $$
where here $c_{w^{-1}}$ is conjugation by $w^{-1}$ in $A^\times$, 
yields an automorphism $\alpha'$ of $A$ in the same class as $\alpha$ 
which extends $\varphi$ and stabilises $L$. If $\alpha$ fixes $P$, so does
$\alpha'$, and hence its restriction to $L$ fixes $P$.
Together with Lemma \ref{ALAut},  this shows that the map sending 
$\alpha$ to the restriction of $\alpha'$ to
$L$ induces  a group  homomorphism $\CT(A)\to$ $\CT(L)$ mapping the 
image of $\Out_P(A)$ in $\CT(A)$ to the image of $\Out_P(L)$ in 
$\CT(L)$, and by Proposition \ref{LtoA} (iv) we have $\Out_P(L)\cong$
$\Hom(E,k^\times)$. 

For the injectivity of this group homomorphism, suppose that 
$\alpha$ stabilises $L$ and restricts to an inner automorphism of $L$. 
By Proposition \ref{LtoA} (iii), the  $A$-$A$-bimodule $A_\alpha$ is 
isomorphic to a direct summand of $A_\alpha \ten_L A$. Since the 
restriction of $\alpha$ to $L$ is inner, we have $A_\alpha\cong$ $A$ 
as $A$-$L$-bimodules. Thus $A_\alpha$ is isomorphic to a direct summand 
of $A\ten_L A$. But then Proposition \ref{LtoA} (iii) implies that 
$A_\alpha\cong$ $A$ as $A$-$A$-bimodules, and hence $\alpha$ is an 
inner automorphism of $A$. This concludes the proof.
\end{proof}

\begin{Proposition} \label{TA-Prop4}
Let $\gamma : L\to$ $A$ be an algebra homomorphism such that
$\gamma(u)=u$ for all $u\in$ $P$ and such that the induced map
$k\tenO L\to k\tenO A$ is the canonical inclusion. Then 
there is an element $c\in 1+\pi A^P$ such that $\gamma(y)=$
${y^c}$ for all $y\in$ $L$.
\end{Proposition}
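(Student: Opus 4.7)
For each $y \in E$ I set $a_y = \gamma(y) y^{-1}$, where $y \in L$ is regarded as an element of $A$ via the canonical embedding and products and inverses are taken in $A^\times$. The first step is to check $a_y \in 1 + \pi A^P$. Since $\gamma$ is a ring homomorphism fixing $P$ elementwise, for every $u \in P$
$$\gamma(y)\, u\, \gamma(y)^{-1} = \gamma(y u y^{-1}) = y u y^{-1},$$
so $a_y$ commutes with $u$, that is $a_y \in A^P$. Because $\gamma$ reduces modulo $\pi$ to the canonical inclusion $k\tenO L \to k\tenO A$, we also have $a_y \equiv 1 \pmod{\pi A}$, giving $a_y \in 1 + \pi A^P$.

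Next I verify the $1$-cocycle condition. Applying $\gamma$ to the identity $y_1 \cdot y_2 = \tau(y_1,y_2)\, y_1 y_2$ in $L$ and using the description of the twist (which is inflated from $E$), a short computation yields
$$a_{y_1 y_2} \;=\; a_{y_1}\cdot \bigl(y_1\, a_{y_2}\, y_1^{-1}\bigr)$$
for all $y_1, y_2 \in E$. This is the $1$-cocycle condition for the conjugation action of $E$ on $1 + \pi A^P$, which is well defined because each $y \in E$, viewed in $A^\times$, normalises the image of $P$ and hence also its centraliser $A^P$.

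The key step is to show that this cocycle is a coboundary: to produce $c \in 1 + \pi A^P$ with $a_y = c \cdot y c^{-1} y^{-1}$ for every $y \in E$. I argue this by a Maschke-type filtration argument. The subgroups $1 + \pi^n A^P$ form an $E$-stable decreasing filtration of $1 + \pi A^P$ whose successive quotients are canonically isomorphic, as $E$-modules, to the $k$-vector spaces $\pi^n A^P / \pi^{n+1} A^P$. Since $|E|$ is prime to $p$, Maschke's theorem ensures that every $1$-cocycle of $E$ with values in such a quotient is a coboundary. Starting from the trivial coboundary, I inductively correct level by level: having trivialised the image of $a$ in $1 + \pi A^P/1 + \pi^n A^P$ by conjugation with some $c_n \in 1 + \pi A^P$, the residual cocycle lies in $1 + \pi^n A^P/1 + \pi^{n+1} A^P$ and can be trivialised by an element of $1 + \pi^n A^P$. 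Since $A^P$ is $\pi$-adically complete (as $A$ is free of finite rank over $\CO$), the sequence of partial products converges to the desired $c \in 1 + \pi A^P$. When $\CO = k$ the filtration is trivial, $c = 1$, and the assertion is immediate.

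Finally, the coboundary relation rearranges to $\gamma(y) = c\, y\, c^{-1}$ for $y \in E$, while $\gamma(u) = u = c u c^{-1}$ holds for $u \in P$ because $c \in A^P$ commutes with $P$. Since $L$ is $\CO$-spanned by the products $uy$ with $u \in P$ and $y \in E$, multiplicativity of conjugation by $c$ yields $\gamma(x) = c x c^{-1}$ for every $x \in L$, as required. The only substantive point is the cohomological vanishing for $E$ on $1 + \pi A^P$; the rest is direct verification once the cocycle is set up, and the twist $\tau$ never causes a real difficulty because it is inflated from $E$ and normalised, so basis elements of $P$ contribute no scalar to the conjugation calculations above.
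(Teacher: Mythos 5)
Your proof is correct, but it takes a genuinely different route from the paper's. The paper argues via the lifting theory of $p$-permutation modules: the $A$-$L$-bimodules $A$ and $A_\gamma$ are $p$-permutation $\CO(P\times P)$-modules whose reductions mod $\pi$ are identified via the identity map (this is exactly the hypothesis that $\gamma$ reduces to the canonical inclusion), and since $p$-permutation modules and homomorphisms between them lift uniquely from $k$ to $\CO$, one gets an $A$-$L$-bimodule isomorphism $A\cong A_\gamma$ lifting the identity; being left $A$-linear it is right multiplication by some $c\in 1+\pi A$, and right $L$-linearity together with $\gamma|_P=\mathrm{id}$ force $c\in 1+\pi A^P$ and $\gamma(y)=y^c$. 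You instead extract the deviation $a_y=\gamma(y)\,\iota(y)^{-1}$ for $y\in E$, check it lands in the pro-$p$ group $1+\pi A^P$ and satisfies the nonabelian $1$-cocycle identity (the twist $\tau$ does indeed cancel because it is inflated from $E$ and consists of central scalars), and then trivialize the cocycle by successive approximation along the filtration $1+\pi^n A^P$, using Maschke-type vanishing of $H^1(E,\pi^n A^P/\pi^{n+1}A^P)$ since $p\nmid |E|$ and $\pi$-adic completeness of $A^P$. Both are sound. Yours is more self-contained and elementary — it avoids the lifting machinery for $p$-permutation modules and makes the mechanism ($H^1$ of a $p'$-group in a pro-$p$ group is trivial) explicit — at the cost of the cocycle bookkeeping; the paper's is shorter given that machinery, and is the argument that generalizes most directly to other bimodule lifting statements in the text. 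One small remark: your conclusion gives $\gamma(y)=cyc^{-1}$ while the statement is phrased as $\gamma(y)=y^c=c^{-1}yc$; this is harmless since $1+\pi A^P$ is closed under inversion, so one simply replaces $c$ by $c^{-1}$.
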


\begin{proof}
The hypotheses imply that $A$ and $A_\gamma$ are permutation
$P\times P$-modules such that $k\tenO A\cong$ $k\tenO A_\gamma$ 
as $A$-$L$-bimodules, with the isomorphism given by 
$1\otimes a\mapsto 1\otimes a$.
Since $p$-permutation modules over finite group algebras lift
uniquely, up to isomorphism from $k$ to $\CO$, and since 
homomorphisms between $p$-permutation modules lift from $k$
to $\CO$ (see e.g. \cite[Theorem 5.11.2]{LiBookI}) it follows that
there is an $A$-$L$-bimodule isomorphism $A\cong A_\gamma$
lifting the identity map on $k\tenO A$. (Note we must temporarily 
pass to the block algebras to apply the results of \cite{LiBookI}.) 
Consequently this bimodule isomorphism is induced by right 
multiplication on $A$ with an element $c$ in $1+\pi A$. 
Since right multiplication by $c$ is also an isomorphism of right 
$L$-modules, it follows that $c\gamma(y)=$ $yc$ for all $y\in$ $L$.
That is, composing $\gamma$ with the automorphism given by 
conjugating with $c$ gives the inclusion map $L\to$
$A$. Since $\gamma$ fixes $P$, it follows that $c\in$ $A^P$, hence
$c\in$ $1+\pi A^P$,  whence the result.
\end{proof}

\section{Proofs} \label{proofSection}

\begin{proof}[{Proof of Theorem \ref{PicGreencorr}}]
We use the notation from Proposition \ref{LtoA}, as briefly reviewed at the
beginning of Section \ref{SourceAut}.
Let $X$ be  an invertible $B$-$B$-bimodule $X$ representing an element in 
$\CL(B)$. We will show that $X$ corresponds (via the standard Morita
equivalence) to an invertible $A$-$A$-bimodule of the form $A_\alpha$, 
for some algebra automorphism $\alpha$ of $A$ which preserves $L$, or 
equivalently, which restricts to an algebra automorphism $\beta$ of $L$. 
Together with Proposition \ref{Greencorrbimod}, this implies that $L_\beta$
corresponds to the Green correspondent $Y$ of $X$. Before getting into 
details, we show how this completes the proof of Theorem
\ref{PicGreencorr}. Since the Green correspondence is a bijection on the
isomorphism classes of the bimodules under consideration, this shows
that the class of $\beta$ in $\Out(L)$ is uniquely determined
by the class of $\alpha$ in $\Out(A)$ (a fact which follows also directly 
from Lemma \ref{ALAut}), and hence that the map
$\CL(B)\to$ $\CL(C)$ induced by the map $A_\alpha\to$ $L_\beta$
is an injective map. This is a group homomorphism because
for any two algebra automorphisms $\alpha$, $\alpha'$ of
$A$ we have a bimodule isomorphism $A_\alpha\tenA A_{\alpha'}\cong$
$A_{\alpha\circ\alpha'}$. 

We turn now to what remains to be proved, namely that an invertible
$B$-$B$-bimodule $X$ representing an element in $\CL(B)$ corresponds 
to an invertible $A$-$A$-bimodule of the form $A_\alpha$, 
for some algebra automorphism $\alpha$ of $A$ which preserves $L$

As pointed out in \cite[Remark 1.2.(e)]{BoKL}, it follows from 
\cite[Theorem 1.1, Lemma 3.15]{Lifocal} that we have canonical 
isomorphisms 
$$\CL(B) \cong \Hom(P/\foc(\CF),\CO^\times)\rtimes \CT(B)\ ,$$ 
$$\CL(C) \cong \Hom(P/[P,P\rtimes E], \CO^\times) \rtimes \CT(C)\ ,$$
where in the second isomorphism we use the fact that the fusion
system of $L$ is $N_\CF(P)$, which is the same as the fusion 
system of the group $P\rtimes E$ on $P$, and hence its focal
subgroup is $[P,P\rtimes E]$. This is a subgroup of $\foc(\CF)$, and therefore
we may identify $\Hom(P/\foc(\CF),\CO^\times)$ with a subgroup
of $\Hom(P/[P,P\rtimes E],\CO^\times)$. 

It suffices to show separately for $X$ representing an element in
$\CT(B)$ and  in $\Hom(P/\foc(\CF),\CO^\times)$ that $X$ corresponds
to an invertible $A$-$A$-bimodule of the form $A_\alpha$ as above. 
As far as $\CT(B)$ is concerned, this  holds by the Propositions
\ref{TA-Prop1} and \ref{TA-Prop2}. Note that if $\CO=k$, then 
$\Hom(P/\foc(\CF),\CO^\times)$ is trivial, so this concludes 
the proof of Theorem \ref{PicGreencorr} in that case. 

We assume now 
that $\CO$ has characteristic zero (and enough roots of unity, by our 
initial blanket assumption).  Suppose that $X$  represents an  element in 
the canonical image of $\Hom(P/\foc(\CF),\CO^\times)$ in $\CL(B)$.
We need to show that  then $X$ corresponds to an invertible 
$A$-$A$-bimodule of the form $A_\alpha$ as stated above. 
Note that the image of the group $\Hom(P/\foc(\CF), \CO^\times)$ in 
$\CL(B)$ is equal to the kernel of the canonical map $\CL(B)\to$
$\CT(k\tenO B)$; this follows from \cite[Remark 1.2.(d)]{BoKL}. This
kernel consists only of isomorphism classes of linear source invertible 
$B$-$B$-bimodules with diagonal vertex $\Delta P=$ 
$\{(u,u)\ |\ u\in P\}$. More precisely, by \cite[Lemmas 2.3, 2.7]{BoKL}, 
if $X$ is an invertible $B$-$B$-bimodule with a linear source, then 
there is a unique group homomorphism $\zeta : P\to$ $\CO^\times$
such that $\foc(\CF)\leq \ker(\zeta)$, and such that $X$ is isomorphic
to a direct summand of $\OG i_\eta \tenOP i\OG$, where $\eta$ is the
algebra automorphism of $\OP$ given by $\eta(u)=$ $\zeta(u) u$ for all
$u\in $ $P$. By the results in \cite[Section 3]{Lifocal}, $\eta$ extends to
an algebra automorphism $\alpha$ of $A$ which induces the identity on 
$k\tenO A$, and through the canonical Morita equivalence, $X$
corresponds to the $A$-$A$-bimodule $A_\alpha$. 
By Proposition \ref{TA-Prop4} applied to $\alpha^{-1}$ restricted to $L$, 
we may choose $\alpha$ such that it stabilises $L$. Thus the
restriction of $\alpha$ to $L$ yields an element $L_\alpha$ whose
isomorphism class belongs to the image of 
$\Hom(P/[P,P\rtimes E],\CO^\times)$ in $\CL(L)$.
\end{proof}

\begin{proof}[{Proof of Theorem \ref{Picbound}}]
We use the notation introduced in Section \ref{backgroundSection}.
The key ingredient is the exact sequence of groups
$$\xymatrix{1 \ar[r] & \Out_P(A) \ar[r]& \CE(B) \ar[r]^(.32){\Phi} &
D(P,\CF)\rtimes \Out(P,\CF) }$$
from \cite[Theorem 1.1]{BoKL} (we write here $D(P,\CF)$ 
instead of $D_\CO(P,\CF)$). Denote by $\CE^\Delta(B)$ the subgroup of 
all elements in $\CE(B)$ whose image in $D(P,\CF)\rtimes \Out(P,\CF)$ is 
of the form $(V, \Id_P)$, for some element $V$ in $D(P,\CF)$. That is, 
$\CE^\Delta(B)$ is the inverse image in $\CE(B)$ under the map $\Phi$ of 
the normal subgroup $D(P,\CF)$ of $D(P,\CF)\rtimes \Out(P,\CF)$. Thus 
$\CE^\Delta(B)$ is a normal subgroup of $\CE(B)$ such that the quotient
$\CE(B)/\CE^\Delta(B)$ is isomorphic to a subgroup of $\Out(P,\CF)$. In 
particular, the order of this quotient is determined in terms of the 
defect group $P$. Now $\CE(B)$ is a finite group (by 
\cite[Theorem 1.1 (iii)]{BoKL}), and hence $\CE^\Delta(B)$ is finite. 
Thus the image of $\CE^\Delta(B)$ under $\Phi$ in $D(P,\CF)$ is finite, 
hence contained in the torsion subgroup of the Dade group $D(P)$. By
\cite[Corollary 2.4]{PuFeit}, the torsion subgroup of $D(P)$ is finite,
hence a finite invariant of $P$, and so the image of $\CE^\Delta(B)$ in 
$D(P)$ is bounded in terms of $P$. By 
\cite[Proposition 14.9]{Puigmodules}, the kernel $\Out_P(A)$ of $\Phi$ 
is isomorphic to a subgroup of $\Hom(E,k^\times)$, where $E\cong$ 
$\Out_\CF(P)$ is the inertial quotient. Thus $\ker(\Phi)$ is also 
bounded in terms of $P$. The result follows.
\end{proof}

For the proof of Theorem \ref{Picbound2} we need the following 
observations; we use the well-known fact that the Cartan matrix of
a split finite-dimensional $k$-algebra $A$ is of the form 
$(\dim_k(iAj))$, where $i$, $j$ run over a set of representatives of the 
conjugacy classes of primitive idempotents in $A$ (see e.g. 
\cite[Theorem 4.10.2]{LiBookI}).

\begin{Lemma} \label{Picbound2Lemma}
Let $A$ be a split finite-dimensional $k$-algebra. Let $I$ be a set of
representatives of the conjugacy classes of primitive idempotents in 
$A$. For $i\in I$ set $S_i=Ai/J(A)i$, and for $i$, $j\in I$ set $c_{ij}=$
$\dim_k(iAj)$. Let $M$ be an invertible $A$-$A$-bimodule. Denote 
by $\pi$ the unique permutation of $I$ satisfying $S_{\pi(i)}\cong$
$M\tenA S_i$ for all $i\in I$. We have
$$\dim_k(M) = \sum_{i,j\in I}\ \ c_{ij}\dim_k(S_{\pi(i)}) \dim_k(S_j)\ .$$
Moreover, for any $i$, $j\in$ $I$, we have $c_{\pi(i)\pi(j)}=c_{ij}$.
\end{Lemma}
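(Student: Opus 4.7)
The plan is to establish the second assertion—the invariance $c_{\pi(i)\pi(j)} = c_{ij}$—first, since it follows most directly from Morita theory, and then to deduce the dimension formula from it.

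For the invariance, I would use the standard $k$-linear identification $\Hom_A(Aj, Ai) \cong jAi$, sending $f \mapsto f(j) = jf(j)$ (available because $j$ is idempotent). This gives $c_{ji} = \dim_k \Hom_A(P_j, P_i)$ where $P_l = Al$ is the indecomposable projective with top $S_l$. Since $M$ is an invertible bimodule, $M\tenA -$ is a self-equivalence of the category of left $A$-modules which sends $S_l$ to $S_{\pi(l)}$ by the definition of $\pi$, and hence sends the projective cover $P_l$ of $S_l$ to the projective cover $P_{\pi(l)}$ of $S_{\pi(l)}$. Since any equivalence preserves $k$-dimensions of Hom spaces,
\[ c_{ji} = \dim_k \Hom_A(P_j, P_i) = \dim_k \Hom_A(P_{\pi(j)}, P_{\pi(i)}) = c_{\pi(j)\pi(i)}, \]
which is the claimed invariance after renaming.

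For the dimension formula, the splitting hypothesis gives $A \cong \bigoplus_{i \in I} P_i^{\dim_k(S_i)}$ as a left $A$-module. Applying the equivalence $M\tenA -$ (and using that tensor products commute with direct sums) produces an isomorphism of left $A$-modules $M \cong \bigoplus_{i \in I} P_{\pi(i)}^{\dim_k(S_i)}$. A complete set of primitive orthogonal idempotents of $A$ contains exactly $\dim_k(S_j)$ elements conjugate to each $j \in I$, so decomposing $Al$ via this set yields $\dim_k(P_l) = \sum_{j \in I} c_{jl}\dim_k(S_j)$. Combining,
\[ \dim_k(M) = \sum_{i,j \in I} c_{j\pi(i)} \dim_k(S_i) \dim_k(S_j). \]

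To bring this into the stated form I would invoke the invariance in the form $c_{j\pi(i)} = c_{\pi^{-1}(j),i}$, reindex via $j \mapsto \pi(j)$, and relabel the dummy indices. The Morita-theoretic ingredients are entirely standard; the only mildly delicate point is the index bookkeeping in this final substitution, which is why proving the invariance first is essential.
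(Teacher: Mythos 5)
Your proof is correct. It uses the same underlying ingredients as the paper's proof (Morita theory, decomposition of $M$ into indecomposable projectives, the identification $\Hom_A(Aj,Ai)\cong jAi$), but the bookkeeping is arranged from the other side: you decompose $M$ as a \emph{left} $A$-module by transporting $A\cong\bigoplus_i P_i^{\dim_k S_i}$ through the equivalence $M\tenA-$, whereas the paper decomposes $M$ as a \emph{right} $A$-module as $\bigoplus_i (iA)^{m_i}$ and then reads off $m_i=\dim_k(M\tenA S_i)=\dim_k(S_{\pi(i)})$ directly. The paper's choice has the advantage that the multiplicities come out as $\dim_k(S_{\pi(i)})$ immediately, so the stated dimension formula drops out without any reindexing and the Cartan invariance $c_{\pi(i)\pi(j)}=c_{ij}$ can be proved afterwards as an independent, second claim. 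Your route produces the formula $\dim_k(M)=\sum_{i,j} c_{j\pi(i)}\dim_k(S_i)\dim_k(S_j)$, which genuinely requires the invariance (proved first) plus the substitution $c_{j\pi(i)}=c_{\pi^{-1}(j),i}$ and a change of dummy indices to reach the stated form; you have verified this correctly, so there is no gap, only slightly more index-juggling than the paper's arrangement.
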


\begin{proof}
Since $A$ is split, for any $i\in$ $I$ we have $iS_i\cong$ $k$, and for
any two different $i$, $j\in$ $I$ we have $jS_i=0$.
As a right $A$-module, $M$ is a progenerator, and hence we have an 
isomorphism of right $A$-modules $M\cong$ $\oplus_{i\in I}\ (iA)^{m_i}$
for some positive integers $m_i$. Thus we have an isomorphism of vector
spaces $M\tenA S_i \cong$ $\oplus_{j\in I} (jA\tenA S_i)^{m_j}$. By the
above, the terms with $j\neq i$ are zero while $iA\tenA S_i$ is 
one-dimensional, and hence $\dim_k(S_{\pi(i)})=$ $\dim_k(M\tenA S_i)=$
$m_i$. Note that $\dim_k(iA)=$ $\sum_{j\in I}\ c_{ij} \dim_k(S_j)$. Thus
$$\dim_k(M)=\sum_{i\in I} \dim_k(iA)\cdot m_i=\sum_{i,j\in I}\ \ c_{ij}\dim_k(S_{\pi(i)}) \dim_k(S_j)$$
as stated. 
Since the functor $M\tenA -$ is an equivalence sending $S_i$ to a module
isomorphic to $S_{\pi(i)}$, it follows that this functor sends $Ai$ to
a module isomorphic to $A\pi(i)$ and induces
isomorphisms $\Hom_A(Ai, Aj)\cong$ $\Hom_A(A\pi(i),A\pi(j))$,
hence $iAj\cong$ $\pi(i)A\pi(j)$. The equality $c_{\pi(i)\pi(j)}=c_{ij}$
follows.
\end{proof}

\begin{proof}[{Proof of Theorem \ref{Picbound2}}]
In order to prove Theorem \ref{Picbound2} we may assume that $\CO=k$.
We use the notation as in Lemma \ref{Picbound2Lemma}. By the 
assumptions, the Cartan matrix $C=$ $(c_{ij})_{i,j\in I}$ of $A$ is
symmetric and positive definite. Thus the map $(x,y)\to$ $x^TCy$ from
$\R^{|I|}\times \R^{|I|}$ to $\R$ is an inner product. The Cauchy-Schwarz
inequality yields $|x^TCy|^2\leq |x^T Cx| \cdot |y^TCy|$. We are going to
apply this to the dimension vectors $x=(\dim_k(S_i))_{i\in I}$ and
$y=(\dim_k(S_{\pi(i)}))_{i\in I}$. By Lemma \ref{Picbound2Lemma},
we have $\dim_k(M)=$ $x^TCy$. Applied to $M=A$ (and $\pi=\Id$) we also
have that $\dim_k(A)=$ $x^T Cx$. The last statement in Lemma
\ref{Picbound2Lemma} implies that $x^T Cx=$ $y^TCy$. Thus the
Cauchy-Schwarz inequality yields $\dim_k(M)=$ $x^T Cy\leq$ $x^TCx=$
$\dim_k(A)$ as stated. 

The Cauchy-Schwarz inequality is an equality if and only if the dimension
vectors $x$ and $y$ are linearly dependent. Since both vectors consist of 
the same positive integers (in possibly different orders) this is the case if 
and only if $x=y$, or equivalently, if and only if $\dim_k(S_{\pi(i)})=$ 
$\dim_k(S_i)$  for all $i\in I$. By \cite[Proposition 4.7.18]{LiBookI},
this  holds if and only if there is an $A$-$A$-bimodule isomorphism 
$M\cong$ $A_\alpha$ for some $\alpha\in$ $\Aut(A)$. 
This completes the proof.
\end{proof}

\begin{proof}[{Proof of Theorem \ref{LPicnormal}}]
It is clearly enough to prove the theorem for $\CO=$ $k$. Let $G$ be a 
finite group and $B=$ $kGb$ a block of $kG$ with normal defect group 
$P$. We first claim that we may assume $B$ is isomorphic to its own 
source algebra. More specifically we assume that $G=$ $P\rtimes H$, 
for a $p'$-group $H$ and $Z=C_H(P)$ a cyclic subgroup such that $b\in$ 
$kZ$. Indeed $B$ is certainly source algebra equivalent (or equivalent 
as an interior $P$-algebra) to such a block (see e.g. 
\cite[Theorem 6.14.1]{LiBookII}). Since, by \cite[Lemma 2.8(ii)]{BoKL}, 
source algebra equivalences preserve $\CE(B)$ and $\CL(B)$, we may 
assume that $B$ is of the desired form.

Let $M$ be an invertible $B$-$B$-bimodule with endopermutation source. 
Let $Q$ be a vertex of $M$ which, by \cite[Lemma 1.1]{BoKL}, is 
necessarily of the form $\Delta\varphi=$ 
$\{(\varphi(u),u)\ |\ u\in P\}\leq$ $P\times P$, for some $\varphi\in$ 
$\Aut(P)$. Let $V$ be a source for $M$ with respect to the vertex $Q$. 
In particular, $V$ is absolutely indecomposable (see e.g. 
\cite[Proposition 7.3.10]{LiBookII}). It follows from Green's Indecomposablity 
Theorem that $U=$ $\Ind_Q^{P\times P}(V)$ is indecomposable. We now 
consider $I=\Stab_{G\times G}(U)$, the stabiliser of $U$ in $G \times G$. 
If $h\in$ $H$, then the $\CO(P\times P)$-module ${^{(h,1)}U}$ has vertex 
$\Delta(c_h\circ \varphi)$, where $c_h$ denotes conjugation by $h$. 
Therefore, if $(h,1)\in$ $I$, then
$$\Delta(c_h\circ \varphi)=(x,y)(\Delta\varphi)(x,y)^{-1}=
\Delta(c_x\circ \varphi\circ c_{y^{-1}}),$$
for some $x,y\in$ $P$. In other words,
$$c_h=c_x\circ \varphi\circ c_{y^{-1}}\circ\varphi^{-1}=
c_x\circ c_{\varphi(y)^{-1}}.$$
In particular, $c_h$ is an inner automorphism of $P$. However, since 
$H/Z$ is a $p'$-group, $c_h$ is an inner automorphism of $P$ if and only 
if $h\in$ $Z$. So we have an injective map between sets 
$H/Z\to (G\times G)/I$, giving that $[G\times G:I]\geq$ $[H:Z]$. 
Therefore, since $M\cong$ $\Ind_I^{G\times G}(W)$, for some direct 
summand $W$ of $\Ind_{P\times P}^I(U)$,
$$\dim_k(M)= [G\times G:I]\cdot\dim_k(W)\geq 
[H:Z]\cdot\dim_k(W)\geq[H:Z]\cdot\dim_k(U)$$
$$=[H:Z]\cdot[P\times P:Q]\cdot\dim_k(V)=[H:Z]\cdot |P|\cdot\dim_k(V)=[G:Z]\cdot\dim_k(V)=\dim_k(B)\cdot\dim_k(V).$$
By Theorem \ref{Picbound2}, we have $\dim_k(B)\geq$ $\dim_k(M)$.
This yields that $\dim_k(V)=1$ as desired.
\end{proof}

\begin{Example} \label{EPicExamples}
Let $p=$ $3$ and $G=$ $Q_8\rtimes P$, where the action of $P\cong$ 
$C_3$ on $Q_8$ is non-trivial. Set $B=$ $\CO Gb$, where $b=(1-z)/2$ and 
$z$ is the unique non-trivial central element in $Q_8$. Now 
$\CO Q_8 b\cong$ $M_2(\CO)$ and conjugation by a non-trivial element 
of $P$ induces an non-trivial automorphism of $\CO Q_8 b$. We 
temporarily assume $\CO=k$. Since every $\CO$-algebra automorphism 
of $M_2(\CO)$ is inner and every non-trivial element of order three in
$M_2(\CO)$ is conjugate to
$$x=\left( {\begin{array}{cc}
   1 & 1 \\
   0 & 1 \\
  \end{array} } \right)\ ,$$
it follows that
$(\CO Q_8 b)^P\cong$ $\CO\cdot 1\oplus \CO\cdot x\subset$ $M_2(\CO)$. In 
particular, $(\CO Q_8 b)^P$ is local and so $B^P\cong$ 
$(\CO Q_8 b)^P\tenO\CO P$ is also local meaning $B$ is its own 
source algebra. This holds even with the assumption that $\CO=$ $k$ dropped.

As $B$ is nilpotent, we can apply \cite[Example 7.2]{BoKL} and construct an 
element of $\CE(B\tenO \CO P)$ not in $\CL(B\tenO \CO P)$. 
With Theorem \ref{LPicnormal} in mind, we have shown that 
Theorem \ref{PicGreencorr} does not hold with $\CL$ replaced by $\CE$.
\end{Example}

\begin{Remark} \label{BoKLcorrection}
In view of the notational conventions in \cite[Proposition 2.6]{BoKL},
the group homomorphism $\Phi$ in \cite[Theorem 1.1]{BoKL} should send
$X$ to $(V, \varphi^{-1})$ instead of $(V,\varphi)$.
\end{Remark}

\medskip\noindent
{\it Acknowledgement.} The authors would like to thank Charles Eaton and
Radha Kessar for some very helpful conversations during the writing of this
paper. The second author acknowledges support from EPSRC, grant 
EP/T004606/1.


\end{document}